\numberwithin{equation}{section}
\theoremstyle{plain}
\newtheorem{Proposition}[equation]{Proposition}
\newtheorem{Corollary}[equation]{Corollary}
\newtheorem*{Corollary*}{Corollary}
\newtheorem{Theorem}[equation]{Theorem}
\newtheorem*{Theorem*}{Theorem}
\newtheorem{Lemma}[equation]{Lemma}
\theoremstyle{definition}
\newtheorem{Conjecture}[equation]{Conjecture}
\newtheorem{Example}[equation]{Example}
\newtheorem{Remark}[equation]{Remark}
\def\C{\mathbb{C}}
\def\R{\mathbb{R}}
\def\D{\mathbb{D}}
\def\T{\mathbb{T}}
\def\K{\mathcal{K}}
\def\phi{\varphi}
\renewcommand{\ker}{\operatorname{Ker}}
\newcommand{\beqa}{\begin{eqnarray*}}
\newcommand{\eeqa}{\end{eqnarray*}}
\renewcommand{\leq}{\leqslant}
\renewcommand{\geq}{\geqslant}
\renewcommand{\subset}{\subseteq}
\author{Javad Mashreghi}
\address{D\'epartement de math\'ematiques et de statistique, Universit\'e Laval, Qu\'ebec, QC,
Canada, G1K 0A6}
\email{javad.mashreghi@mat.ulaval.ca}
\author[M. Ptak]{Marek Ptak}
\address{Department of Applied Mathematics,
University of Agriculture, ul. Balicka 253c\\ 30-198 Krak\'ow, Poland.}
\email{rmptak@cyf-kr.edu.pl}
\author[Ross]{William T. Ross}
	\address{Department of Mathematics and Computer Science, University of Richmond, Richmond, VA 23173, USA}
	\email{wross@richmond.edu}
	\subjclass[2010]{30H10, 47B35, 30E05, 41A05}
\title[Outer functions]{Interpolating with outer functions }
\keywords{Interpolating sequences, Hardy spaces, outer functions, Toeplitz operators, model spaces}
\thanks{This work was supported by the NSERC Discovery Grant (Canada) and by the Ministry of Science and
Higher Education of the Republic of Poland.}
\begin{document}

\begin{abstract}
The classical theorems of Mittag-Leffler and Weierstrass show that when $\{\lambda_n\}$ is a sequence of distinct points in the open unit disk $\D$, with no accumulation points in $\D$,  and $\{w_n\}$ is any sequence of complex numbers, there is an analytic function $\phi$ on $\D$ for which $\phi(\lambda_n) = w_n$. 
A celebrated theorem of Carleson \cite{MR117349} characterizes when, for a bounded sequence $\{w_n\}$, this interpolating problem can be solved with a bounded analytic function. A theorem of Earl \cite{MR284588}  goes further and shows that when Carleson's condition is satisfied,  the interpolating function $\phi$ can be a constant multiple of a Blaschke product. In this paper, we explore when the interpolating $\phi$  can be an outer function. We then use our results to refine a result of McCarthy \cite{MR1065054} and explore the common range of the co-analytic Toeplitz operators on a model space. 
\end{abstract}

\maketitle

\section{Interpolation}

Interpolation problems for analytic functions have been a mainstay in complex analysis since its conception in the late 19th century. The general idea is that we have a certain class $\mathcal{X}$ of analytic functions on the open unit disk $\D$ (e.g., all analytic functions, bounded analytic functions, analytic self maps of $\D$, Blaschke products, outer functions). Then, for a sequence $\{\lambda_n\}$ of distinct points in $\D$  and sequence $\{w_n\}$ of complex numbers, we want to find an $f \in \mathcal{X}$ such that $f(\lambda_n) = w_n$ for all $n$. If we are not able to solve this problem for all $\{\lambda_n\}$ and $\{w_n\}$, what restrictions must we have? 

Suppose $\mathcal{X}$ is the class of {\em all} analytic functions on $\D$. For a sequence $\{\lambda_n\}$ of distinct points in $\D$ (with no limit point in $\D$) and any sequence $\{w_n\}$, an application of the classical Mittag--Leffler theorem and the Weierstrass factorization theorem produces an analytic function $f$ with $f(\lambda_n) = w_n$ for all $n$. In other words, for the class $\mathcal{X}$ of all analytic functions on $\D$,  besides the obvious restriction that $\{\lambda_n\}$ has no limit points in $\D$, there is no other restriction on $\{\lambda_n\}$ to be able to interpolate any sequence $\{w_n\}$ with an  analytic function. 

Of course, there are the finite interpolation problems. For example, a well known result of Lagrange, from 1795, says that given distinct $\lambda_1, \ldots, \lambda_n$ in $\C$ and arbitrary  $w_1, \ldots, w_n$  in $\C$ there is a polynomial $p$ of degree $n - 1$ such that $p(\lambda_j) = w_j$ for all $1 \leq j \leq n$. There is also the often-quoted result  of Nevanlinna and Pick (from 1916) which says that given distinct $\lambda_1, \ldots, \lambda_n$ in $\D$ and arbitrary $w_1, \ldots, w_n$ in $\D$, there is an $f \in H^{\infty}$ with $|f| \leq 1$ on $\D$ for which $f(\lambda_j) = w_j$, $1 \leq j \leq n$, if and only if the Nevanlinna-Pick matrix 
$$\Big[\frac{1 - \overline{w_j} w_i}{1 - \overline{\lambda_j} \lambda_i}\Big]_{1 \leq i, j \leq n}$$ is positive semidefinite \cite{MR1882259, Garnett}. 

When $\mathcal{X}$ is the class of bounded analytic functions on $\D$, denoted in the literature by $H^{\infty}$, a well-known theorem of Carleson \cite{MR117349} (see also \cite{Garnett})  says the following: A sequence $\Lambda = \{\lambda_n\} \subset \D$ has the property that given any bounded sequence $\{w_n\}$ there is a  $\phi \in H^{\infty}$ such that $\phi(\lambda_n) = w_n$ if and only if 
\begin{equation}\label{deltaaaa}
\delta(\Lambda) := \inf_{n \geq 1} \prod_{k=1, k \ne n}^{\infty} \left| \frac{\lambda_k-\lambda_n}{1-\overline{\lambda}_k \lambda_n}\right| > 0.
\end{equation}

Such $\{\lambda_n\}$ are called {\em interpolating sequences}. In this paper we explore the type of functions $\phi \in H^{\infty}$ that can perform the interpolating. For example, a result of Earl \cite{MR284588} says that when $\delta(\Lambda) > 0$ one can always take the interpolating function
$\phi$ to be a constant multiple of a Blaschke product. Other types of interpolation problems are discussed  in \cite{MR2309193, MR133446}. 

Inspired by a common range problem for co-analytic Toeplitz operators on model spaces that we will discuss at the end of this paper, we focus on conditions on the targets $\{w_n\}$ that allow us to take $\phi$ to be an outer function (bounded outer function). 
Our results are as follows: In Theorem \ref{IT001} we prove that for interpolating $\{\lambda_n\}$ and bounded $\{w_n\}$  with 
$$\inf_{n \geq 1} |w_n| > 0,$$  there is a bounded outer function $\phi$ such that $\phi(\lambda_n) = w_n$ for all $n$. As an application of this, we prove  in Proposition \ref{comp886bb} that when $\{w_n\}$ and $\{w^{'}_n\}$ are  bounded  with 
$$0 < m \leq \Big|\frac{w_n}{w^{'}_n}\Big| \leq M < \infty, \quad n \geq 1,$$ then $\{w_n\}$ can be interpolated by an outer function (bounded outer function)  if and only if $\{w^{'}_n\}$ can as well. Therefore, for subsequent discussions, without loss of generality, we may consider positive target sequences. To establish conditions for which $\{w_n\}$ can be interpolated by an outer function, we prove in Theorem \ref{T:interpol-outer-inner} that when $\{w_n\} \subset \C \setminus \{0\}$ can be interpolated by an outer function it must be the case that
$$\lim_{n \to \infty} (1 - |\lambda_n|) \log |w_n| = 0.$$ Hence sequences such as
$$w_n = e^{-\frac{1}{1-|\lambda_n|}}, \quad n \geq 1,$$
 can not be interpolated by an outer function.  In other words, any $H^{\infty}$ function $\phi$ for which $\phi(\lambda_n) = w_n$ for all $n$ (and such $\phi$ exist by Carleson's theorem) must have an inner factor.  In fact (Theorem \ref{Blakshkefactor}), any bounded analytic function $\phi$ which satisfies the stronger decay condition
$$\phi(\lambda_n) = e^{-\frac{1}{(1 - |\lambda_n|)^2}}$$  must have a Blaschke factor.   In Theorem \ref{growthragfetd6dx6} and Theorem \ref{MTThree} we discuss the sharpness of Theorem \ref{T:interpol-outer-inner} and explore conditions on the decay the rate of  $(1 - |\lambda_n|) \log |w_n|$  to determine when there is an outer function (bounded outer function) that interpolates $\{w_n\}$.  

Worth mentioning here is the paper \cite{MR2309193} which examines the question of when the interpolating function can be zero free. The outer functions are a strict subclass of the zero free functions since the zero free functions can have a singular inner factor. 

In the final part of this paper we apply our results to determine the common range for the co-analytic Toeplitz operators on a model space. In fact, this was our original reason for exploring this topic. For an inner function $u$, define the model space $\K_u = (u H^2)^{\perp}$. It is known \cite[p.~106]{MR3526203} that $\K_u$ is an invariant subspace for any co-analytic Toeplitz operator $T_{\overline{\phi}}$ on $H^2$.  In \cite{MR1065054} McCarthy described the set 
$$\mathscr{R}(H^2) := \bigcap \big\{T_{\overline{\phi}} H^2: \phi \in H^{\infty} \setminus \{0\}\big\},$$
the functions in the common range of all the (nonzero) co-analytic Toeplitz operators. By the Douglas factorization theorem (see \S \ref{Crrrsdsdsd546u} below), $H^{\infty} \setminus \{0\}$ can be replaced by $H^{\infty} \cap \mathcal{O}$, where $\mathcal{O}$ is the class of outer functions. 

As an application of our outer interpolation results, we determine 
$$\mathscr{R}(\K_u) : = \bigcap \big\{T_{\overline{\phi}} \K_u: \phi \in \mathcal{O} \cap H^{\infty}\big\},$$
the common range on a fixed model space. While $\mathscr{R}(H^2)$ is somehow ``large'', for example, it contains all functions that are analytic in a neighborhood of $\overline{\D}$,  $\mathscr{R}(\K_u)$ can be considerably smaller. In fact $\mathscr{R}(\K_u) = \{0\}$ for certain $u$ (Example \ref{nosmooth}). We describe $\mathscr{R}(\K_u)$  for any inner function (Theorem \ref{notusefule8shyshhH}) and when $u$ is an interpolating Blaschke product (zeros are an interpolating sequence), we give an alternate, and more tangible,  description involving our outer interpolating results (Theorem \ref{Hdecereuiusvnn77}). 

\section{Some notation}\label{notataions8}

Let us set our notation and review some well-known facts about the classes of analytic functions that appear in this paper. The books \cite{Duren, Garnett} are thorough references for the details and proofs. 
In this paper, $\D$ is the open unit disk $\{z \in \C: |z| < 1\}$, $\T$ the unit circle $\{z \in \C: |z| = 1\}$, and $dm = d \theta/2\pi$ is normalized Lebesgue measure on $\T$. 
The space $H^1$, the {\em Hardy space}, is the set of analytic functions $f$ on $\D$ for which 
$$\sup_{0 < r < 1} \int_{\T} |f(r \xi)| dm(\xi) < \infty.$$
Standard results  say that every $f \in H^1$ has a radial limit 
$$f(\xi): = \lim_{r \to 1^{-}} f(r \xi)$$ for almost every $\xi \in \T$ and 
$$\int_{\T} |f(\xi)| dm(\xi) = \sup_{0 < r < 1} \int_{\T} |f(r \xi)| dm(\xi).$$
As is the usual practice in Hardy spaces, we use the symbol $f$ to denote the boundary function on $\T$ as well as the analytic function on $\D$. 

We let $H^{\infty}$ be the bounded analytic functions on $\D$ and observe that $H^{\infty} \subset H^1$ and thus every $f \in H^{\infty}$ also has a radial boundary function. In fact, 
$$\sup_{z \in \D} |f(z)| = \mbox{ess-sup}_{\T} |f|.$$

If  $W$ is an extended real-valued integrable function on $\T$, 
\begin{equation}\label{outerfunctions88u}
\phi(z) = \exp\Big(\int_{\T} \frac{\xi + z}{\xi - z} W(\xi) dm(\xi)\Big), \quad z \in \D,
\end{equation}
is analytic on $\D$ and is called an {\em outer function}.
Observe  that 
$$
\log|\phi(z)|  = \int_{\T} \Re\Big(\frac{\xi + z}{\xi - z}\big) W(\xi)\,dm(\xi) 
=  \int_{\T} \frac{1 - |z|^2}{|\xi - z|^2} W(\xi)\,dm(\xi),$$
which is the Poisson integral of $W$. 
By some harmonic analysis \cite[p.~15]{Garnett}, 
$$\lim_{r \to 1^{-}} \log|\phi(r \zeta)| = \log |\phi(\zeta)| = W(\zeta)$$ for almost every $\zeta \in \T$. This process can be reversed and so given an extended real-valued $W \in L^1(\T)$, there is an outer $\phi$ with 
\begin{equation}\label{eeewwww}
|\phi(\zeta)| = e^{W(\zeta)}
\end{equation}
 almost everywhere (in terms of radial boundary values).

The outer functions belong to the {\em Smirnov class}
$$N^{+} = \big\{f/g: f \in H^{\infty}, g \in H^{\infty} \cap \mathcal{O}\big\}$$ and every $F \in N^{+}$ can be factored as $F = I_{F} O_{F}$, where $I_{F}$ is inner ($I_F \in H^{\infty}$ with unimodular boundary values almost everywhere on $\T$) and $O_{F}$ is outer. There are also the inclusions 
$H^{\infty} \subset H^1 \subset N^{+}$.

\section{Positive results}

We start off with examples of  bounded $\{w_n\}$ which can be interpolated by outer functions and explore the ones which can not in the next section. 

\begin{Theorem}\label{IT001}
Suppose $\{\lambda_n\} \subset \D$ is interpolating. For a bounded  $\{w_n\}$ with
$$\inf_{n \geq 1} |w_n| > 0,$$ there is a bounded outer function $\phi$ such that $\phi(\lambda_n) = w_n$ for all $n$. 
\end{Theorem}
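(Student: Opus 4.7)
My plan is to reduce Theorem~\ref{IT001} directly to Carleson's interpolation theorem by passing to logarithms. The crucial observation, which makes the whole argument short, is that the exponential of a bounded analytic function is automatically a bounded outer function; so once we can find an $H^\infty$ function that interpolates a bounded \emph{complex} sequence of logarithms, its exponential will do the job.

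Concretely, because $0 < m := \inf_{n} |w_n|$ and $M := \sup_{n} |w_n| < \infty$, the real numbers $\log |w_n|$ are bounded. Choose arguments $\theta_n \in [0,2\pi)$ with $w_n = |w_n|e^{i\theta_n}$, and set
$$c_n := \log |w_n| + i \theta_n,$$
so that $\{c_n\}$ is a bounded sequence in $\C$. Since $\{\lambda_n\}$ is interpolating, Carleson's theorem produces some $g \in H^\infty$ with $g(\lambda_n) = c_n$ for every $n$. Define $\phi := e^{g}$. Then $\phi \in H^\infty$ with $\|\phi\|_\infty \leq e^{\|g\|_\infty}$, $\phi$ is nonvanishing on $\D$, and
$$\phi(\lambda_n) \;=\; e^{c_n} \;=\; |w_n|e^{i\theta_n} \;=\; w_n$$
for every $n$.

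It remains to check that $\phi$ is outer. Writing $g = u + iv$ with $u = \Re g$ bounded and harmonic on $\D$, Schwarz's formula yields
$$g(z) \;=\; i\,\Im g(0) + \int_{\T} \frac{\xi + z}{\xi - z}\, u(\xi)\, dm(\xi),$$
so that
$$\phi(z) \;=\; e^{i\,\Im g(0)} \exp\!\left( \int_{\T} \frac{\xi + z}{\xi - z}\, u(\xi)\, dm(\xi) \right),$$
which, by \eqref{outerfunctions88u}, is a unimodular constant times an outer function (with boundary modulus $e^{u}$), hence outer itself. The only point requiring any care beyond citing Carleson is this last observation that $e^{g}$ is outer whenever $g \in H^\infty$; there is no genuine obstacle, since as soon as one notices that a bounded branch of $\log w_n$ exists, the rest is essentially automatic.
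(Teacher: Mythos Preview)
Your proof is correct. The key step---that $e^{g}$ is outer whenever $g\in H^{\infty}$---is justified exactly as you say via the Schwarz representation of $g$, and in fact the paper proves the stronger statement (Lemma~\ref{h1outern8}) that $e^{f}$ is outer for every $f\in H^{1}$.

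Your route is genuinely different from the paper's. The paper first proves Lemma~\ref{L:circle}: if the targets lie in a closed disk $\overline{D(a,r)}$ with $r/|a|<C(\Lambda)$, then one may interpolate by a bounded outer function whose rotation has positive real part. It then observes that, since $m\le|w_n|\le M$, the $k$-th roots $w_n^{1/k}$ (principal branch) eventually fall into such a disk; interpolating the roots and taking the $k$-th power finishes the argument. Your approach bypasses all of this by taking logarithms and applying Carleson directly.

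What each approach buys: yours is shorter and conceptually cleaner, and it is essentially the observation the paper itself hints at in the Remark following Lemma~\ref{L:circle} (where $\phi=e^{\psi}$ is noted to be nonvanishing), completed by the realization that $e^{\psi}$ is in fact outer. The paper's longer route, however, yields extra structure recorded in Remark~\ref{detailreg0}: when the targets are positive one may choose the interpolating outer function with $\Re\phi>0$. That property is used later (in the proof of Theorem~\ref{MTThree}) and does not fall out of your exponential construction, since Carleson's $g$ need not be real-valued even when the $c_n$ are real.
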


The proof requires a few preliminaries. The first is a more detailed version of Carleson's result on interpolating sequences \cite[p.~268]{MR1669574}.

\begin{Theorem}[Carleson] \label{T:interpol}
For an interpolating $\Lambda = \{\lambda_n\}$ there is a constant $0<C(\Lambda)<1$ with the following properties. 
\begin{enumerate}[(i)]
\item For each  $\{w_n\}$ satisfying
$
|w_n| \leq C(\Lambda)$ for all $n \geq 1$,
there is an $\phi \in H^\infty$ such that $\|\phi\|_\infty \leq 1$ and
$\phi(\lambda_n)=w_n$ for all $n$.
\item There are bounded  $\{w_n\}$ with
$
|w_n| > C(\Lambda),
$
for at least one $n$,
such that $\|\phi\|_{\infty} > 1$ for any interpolating function $\phi \in H^{\infty}$.
\end{enumerate}
\end{Theorem}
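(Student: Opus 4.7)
The natural strategy is to introduce the \emph{interpolation constant}
\[
M(\Lambda) := \sup_{\|w\|_\infty \leq 1} \inf\big\{\|\phi\|_\infty : \phi \in H^\infty,\ \phi(\lambda_n)=w_n \text{ for all } n\big\},
\]
set $C(\Lambda):=1/M(\Lambda)$, and deduce both halves by homogeneity. I first verify $1 < M(\Lambda) < \infty$. The upper bound is Carleson's theorem combined with the open mapping theorem: the evaluation operator $T : H^\infty \to \ell^\infty$, $T\phi = \{\phi(\lambda_n)\}$, is bounded with $\|T\|\leq 1$ and, because $\Lambda$ is interpolating, surjective, so the induced quotient map $\tilde T : H^\infty/\ker T \to \ell^\infty$ is a Banach-space isomorphism and $M(\Lambda)$ is by construction $\|\tilde T^{-1}\|$. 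For the strict lower bound $M(\Lambda) > 1$, consider the extreme datum $w_1 = 1,\ w_n = 0$ $(n \geq 2)$: any interpolant $\phi$ with $\|\phi\|_\infty \leq 1$ would attain the value $1$ at the interior point $\lambda_1$, hence would be the constant function $1$ by the strict maximum principle, contradicting $\phi(\lambda_n)=0$ for $n \geq 2$. A weak-$*$ compactness argument (extract a weak-$*$ limit of a near-minimizing sequence $\phi_k$ and use weak-$*$ continuity of point evaluation on $H^\infty=(L^1/H^1_0)^*$) promotes the absence of a norm-$\leq 1$ interpolant to the strict inequality $\inf\|\phi\|_\infty > 1$, so $M(\Lambda) > 1$ and $C(\Lambda) \in (0,1)$.

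The same compactness argument shows that for every $\{u_n\}$ with $\|u\|_\infty \leq 1$ the inner infimum in the definition of $M(\Lambda)$ is attained. Part~(i) is then just scaling: given $\{w_n\}$ with $|w_n| \leq C(\Lambda)$, set $u_n := M(\Lambda) w_n$, so $\|u\|_\infty \leq 1$; pick an attaining $\psi \in H^\infty$ with $\psi(\lambda_n) = u_n$ and $\|\psi\|_\infty \leq M(\Lambda)$, and take $\phi := C(\Lambda)\psi$. For Part~(ii), fix a small $\varepsilon > 0$ with $M(\Lambda) - \varepsilon > 1$, use the sup definition of $M(\Lambda)$ to select $\{u_n\}$ with $\|u\|_\infty = 1$ whose interpolation infimum exceeds $M(\Lambda) - \varepsilon$, and pick an index $n_0$ with $|u_{n_0}|$ strictly greater than $(M(\Lambda) - \varepsilon)/M(\Lambda)$ (possible because $\sup_n |u_n| = 1$). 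The rescaled target $w_n := u_n / (M(\Lambda) - \varepsilon)$ then satisfies $|w_{n_0}| > 1/M(\Lambda) = C(\Lambda)$, while any interpolant $\phi$ of $\{w_n\}$ produces $\psi := (M(\Lambda) - \varepsilon)\phi$ interpolating $\{u_n\}$, forcing $\|\phi\|_\infty > 1$.

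The only genuinely delicate step is the bookkeeping in Part~(ii): the choices of $\varepsilon$ and of the index $n_0$ have to be balanced so that the strict inequalities $|w_{n_0}| > C(\Lambda)$ and $\|\phi\|_\infty > 1$ emerge simultaneously -- the borderline choice $|u_{n_0}| = (M(\Lambda) - \varepsilon)/M(\Lambda)$ gives only equalities, so a slightly sharper selection is required. Everything else is routine: Carleson's theorem, the open mapping theorem, the maximum principle, and Alaoglu's theorem applied to $H^\infty$ viewed as the dual of $L^1/H^1_0$.
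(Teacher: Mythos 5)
Your argument is correct. Note first that the paper does not actually contain a proof of Theorem \ref{T:interpol}: the result is quoted from \cite[p.~268]{MR1669574}, and the proof environment headed ``Proof of Theorem \ref{T:interpol}'' is a mislabel --- its content (raising a Lemma \ref{L:circle} interpolant to the $k$-th power) is the proof of Theorem \ref{IT001}. So there is no in-paper argument to compare against; what you supply is the standard functional-analytic derivation, and it holds together. Your $C(\Lambda)=1/M(\Lambda)$, with $M(\Lambda)$ the norm of the inverse of the quotient restriction map $H^\infty/\ker T\to\ell^\infty$, is exactly the Carleson interpolation constant: it is finite by Carleson's theorem plus the open mapping theorem, and strictly larger than $1$ by your maximum-principle example $w=(1,0,0,\dots)$ combined with attainment of the infimum. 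That attainment step is the only delicate point and you handle it correctly: a minimizing sequence of interpolants is uniformly bounded, a normal-families (or weak-$*$) limit is again an interpolant because the point evaluations at the $\lambda_n$ pass to the limit, and the norm can only decrease. Part (i) is then pure homogeneity, and your bookkeeping in part (ii) --- normalizing the near-extremal datum to $\|u\|_\infty=1$ (which only increases its interpolation infimum) and choosing $n_0$ with $|u_{n_0}|>(M(\Lambda)-\varepsilon)/M(\Lambda)$ --- does yield a bounded target with $|w_{n_0}|>C(\Lambda)$ all of whose interpolants have norm strictly greater than $1$. In fact the literal statement of (ii) is weaker than what you prove, since the single datum $(1,0,0,\dots)$ already witnesses it for any constant below $1$; your version has the added value of making $\|w\|_\infty$ arbitrarily close to $C(\Lambda)$, which is the sharpness the statement is really after.
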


The number $C(\Lambda)$ is called the {\em Carleson index} for $\Lambda$ and is related to $\delta(\Lambda)$ from \eqref{deltaaaa} by 
$$A \frac{\delta(\Lambda)}{\log\big(\dfrac{e}{\delta(\Lambda)}\big)} \leq C(\lambda) \leq \delta(\Lambda),$$
where $A$ is an absolute constant \cite[p.~268]{MR1669574}.  Our first step is to prove a special case of Theorem \ref{IT001}.

\begin{Lemma} \label{L:circle}
Suppose $\{w_n\} \subset \overline{D(a,r)} = \{z: |z - a| \leq r\}$. If
$r/|a| < C(\Lambda)
$
there is a bounded outer $\phi$ such that $\phi(\lambda_n) = w_n$ for all $n$. 
\end{Lemma}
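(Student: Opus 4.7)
The plan is to reduce the lemma to Carleson's interpolation result (Theorem \ref{T:interpol}(i)) by recentering the target disk about the origin. Since $r/|a| < C(\Lambda) < 1$, necessarily $a \neq 0$. Setting $v_n = (w_n - a)/a$ gives $|v_n| \leq r/|a|$, and after rescaling these targets by $C(\Lambda)|a|/r$ to bring them inside $\overline{D(0, C(\Lambda))}$, an application of Theorem \ref{T:interpol}(i) followed by undoing the rescaling would produce an $h \in H^\infty$ with $h(\lambda_n) = v_n$ and
\[
\|h\|_\infty \leq \frac{r}{|a|\,C(\Lambda)} < 1.
\]

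The candidate interpolating function is then $\phi(z) = a(1 + h(z))$. A direct computation gives $\phi(\lambda_n) = a(1 + v_n) = w_n$, so interpolation holds. The function $\phi$ is clearly bounded above on $\D$, and crucially it is bounded below by a positive constant:
\[
|\phi(z)| \geq |a|\bigl(1 - \|h\|_\infty\bigr) \geq |a|\left(1 - \frac{r}{|a|\,C(\Lambda)}\right) > 0, \qquad z \in \D.
\]
In particular, $\phi$ has no zeros in $\D$.

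It remains to argue that such a $\phi$ must be outer. From the inner-outer factorization $\phi = I_\phi \cdot O_\phi$ described in Section \ref{notataions8}, the absence of zeros forces the Blaschke product part of $I_\phi$ to be trivial, so $I_\phi$ reduces (up to a unimodular constant) to a singular inner function. But any nontrivial singular inner function attains non-tangential limits equal to zero at points in the closed support of its defining singular measure, which is incompatible with the uniform lower bound on $|\phi|$. Hence $I_\phi$ is constant and $\phi$ is an outer function, as required.

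There is no serious obstacle in this argument: the reduction to Theorem \ref{T:interpol}(i) is an elementary affine change of variable, and the ``bounded below implies outer'' step is a standard consequence of the inner-outer factorization. The only subtlety is noting that the \emph{strict} inequality $r/|a| < C(\Lambda)$—as opposed to $\leq$—is precisely what yields $\|h\|_\infty < 1$, and therefore the strict positive lower bound on $|\phi|$ that drives the final step.
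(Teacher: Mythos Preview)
Your argument is correct and the construction is essentially identical to the paper's: both take $\phi$ to be an affine image of a Carleson interpolant, namely $a$ plus a function of sup-norm strictly less than $|a|$. The one difference is in certifying that $\phi$ is outer: you use that $|\phi|$ is bounded below on $\D$, forcing any singular inner factor to be trivial, whereas the paper instead checks that $\Re(e^{-i\alpha}\phi)>0$ (writing $a=|a|e^{i\alpha}$) and cites the standard fact that analytic functions with positive real part are outer. The paper's route feeds directly into Remark~\ref{detailreg0} (that one may arrange $\Re\phi>0$ when the $w_n$ are positive), which is used later; your $\phi=a(1+h)$ with $a>0$ and $\|h\|_\infty<1$ of course gives the same conclusion.
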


The hypothesis $r/|a| < C(\Lambda)$ implies that  $r < |a|$ and so $\overline{D(a,r)} $ does not contain the origin.

\begin{proof}
Set 
$$
t_n :=  C(\Lambda) \frac{w_n-a}{r}.
$$
Then $|t_n| \leq C(\Lambda)$, and, by Theorem \ref{T:interpol}, there is $g \in H^\infty$ with $\|g\|_\infty \leq 1$ and such that $g(\lambda_n)=t_n$ for all $n$. Define
\[
\phi := \frac{r}{C(\Lambda)}g+a.
\]
Clearly $\phi \in H^{\infty}$ and 
\[
\phi(\lambda_n) = \frac{r}{C(\Lambda)}g(\lambda_n)+a = \frac{r}{C(\Lambda)}t_n+a = w_n.
\]
Moreover, with $a = |a|e^{i\alpha}$ and $z \in \D$, 
\begin{align*}
\Re(e^{-i\alpha}\phi(z)) & = \frac{r}{C(\Lambda)} \Re(e^{-i\alpha}g(z)) + |a| 
\geq \frac{r}{C(\Lambda)} \Big(\Re(e^{-i\alpha}g(z)) + 1 \Big) 
\end{align*}
which is positive. 
The condition
$\Re(e^{-i\alpha} \phi)>  0$ 
is sufficient to ensure that $e^{-i\alpha} \phi$ is an outer function \cite[p.~65]{Garnett}. Thus $\phi$ is outer as well.
\end{proof}

\begin{Remark}\label{detailreg0}
When $\{w_n\} \subset (0, \infty)$,  we can choose $\alpha = 0$ in the above proof and  thus choose the interpolating function to satisfy $\Re \phi > 0$. This detail will become important later on. 
\end{Remark}

\begin{Remark}
If one just wanted a nonvanishing interpolating function $\phi \in H^{\infty}$ in Theorem \ref{IT001}, one could take $\phi = e^{\psi}$, where $\psi \in H^{\infty}$ with $\psi(\lambda_n) = \log w_n$ (for a suitably defined logarithm). See \cite{MR2309193} for more on this. 
\end{Remark}

\begin{proof}[Proof of Theorem \ref{T:interpol}]
Fix $0<r<C(\Lambda)$ and consider the closed disk $\overline{D(1,r)}$. Since 
$
m \leq |w_n| \leq M$, $n \geq 1$,
for some positive constants $m$ and $M$, there is a positive integer $k$ such that
$
w_n^{1/k} \in \overline{D(1,r)}$ for all $n \geq 1$.
We use the main branch of logarithm to evaluate $w_n^{1/k}$. Therefore, by Lemma \ref{L:circle}, there is a bounded outer function $g$ such that
$
g(\lambda_n) = w_n^{1/k}$ for all $n$.
The function $\phi := g^k$ is bounded, outer, and 
$
\phi(\lambda_n) = w_n$ for all $n$.
\end{proof}


This next result says that for outer interpolation, we can always assume, for example, that the targets $w_n$ are positive. 

\begin{Proposition}\label{comp886bb}
Suppose $\{\lambda_n\}$ is interpolating and  $\{w_{n}\}$ and $\{w_{n}'\}$ are bounded with
$$0 < m \leq \big|\frac{w_{n}'}{w_n}\big| \leq M < \infty, \quad n \geq 1.$$
Then $\{w_n\}$ can be interpolated by an outer (bounded outer) function if and only if $\{w_{n}'\}$ can be interpolated by an outer  (bounded outer)  function. 
\end{Proposition}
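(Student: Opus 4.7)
The plan is to reduce the proposition to Theorem \ref{IT001} by noting that the ratio sequence $r_n := w_n'/w_n$ is bounded and bounded away from zero, so it itself falls under the hypothesis of Theorem \ref{IT001}.

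First I would prove the direction ``$\Rightarrow$'': assume $\phi$ is an outer (respectively, bounded outer) function with $\phi(\lambda_n)=w_n$ for all $n$. Since
$$m \leq |r_n| \leq M, \quad n \geq 1,$$
Theorem \ref{IT001} supplies a bounded outer function $h$ with $h(\lambda_n)=r_n$. I would then set $\psi:=\phi\, h$ and check that
$$\psi(\lambda_n)=\phi(\lambda_n)\,h(\lambda_n)=w_n r_n = w_n', \quad n \geq 1.$$
The function $\psi$ is outer because the product of two outer functions in $N^{+}$ is outer: in the inner--outer factorization, inner factors multiply, so if both factors have trivial inner part then so does the product. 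When $\phi \in H^{\infty}$, the product $\psi = \phi h$ is also in $H^{\infty}$ (since $h$ is bounded), giving the bounded outer statement.

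For the reverse direction ``$\Leftarrow$'', the hypothesis is symmetric: $0<m\leq |w_n'/w_n|\leq M<\infty$ is equivalent to $0<1/M\leq |w_n/w_n'|\leq 1/m<\infty$, so the same argument with the roles of $\{w_n\}$ and $\{w_n'\}$ interchanged yields the result.

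There is no serious obstacle here; the only point that requires a moment of care is verifying that the product $\phi h$ remains outer when $\phi$ is only assumed outer (not necessarily bounded). This follows from the fact that $\phi \in N^{+}$, $h \in H^{\infty}\cap \mathcal{O} \subset N^{+}$, and the inner--outer factorization in $N^{+}$ is multiplicative, so the inner factor of $\phi h$ is the product of the (trivial) inner factors of $\phi$ and $h$.
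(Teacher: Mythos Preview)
Your proof is correct and follows essentially the same approach as the paper: apply Theorem \ref{IT001} to the ratio sequence $w_n'/w_n$ to obtain a bounded outer function $F$ (your $h$), and multiply the given interpolating outer function by $F$. The paper's proof is terser, leaving the reverse direction implicit by symmetry and not spelling out why the product of outer functions is outer, but the argument is the same.
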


\begin{proof}
 By Theorem \ref{IT001} there is a bounded outer $F$ such that 
$F(\lambda_n) = w_{n}'/w_{n}$ for all $n \geq  1$. 
If there is an outer (bounded outer)  $\phi$ such that  $\phi(\lambda_n) = w_n$ for all $n$, then the outer (bounded outer)  $\phi F$ performs the desired interpolation for $\{w_{n}'\}$. 
\end{proof}

\begin{Remark}
If $\phi$ is outer (bounded outer) then so is $\phi^{c}$ for any $c > 0$. Thus $\{w_{n}^{c}\}$ can be interpolated by an outer (bounded outer) function whenever $\{w_n\}$ can. 
\end{Remark}


\section{Negative results -- existence of an inner factor}

If $\{\lambda_n\}$ is interpolating we know that given any bounded  $\{w_n\}$ there is a $\phi \in H^{\infty}$ such that $\phi(\lambda_n) = w_n$. This next result says that under certain circumstances, any Smirnov interpolating function for $\{w_n\}$ must have an inner factor. 

\begin{Theorem} \label{T:interpol-outer-inner}
If $\{\lambda_n\}$ is interpolating and $\{w_n\} \subset \C \setminus \{0\}$ satisfies 
\[
\lim_{n \to \infty} (1-|\lambda_n|)\log|w_n| \ne 0,
\]
then any $\phi \in N^{+}$ satisfying $\phi(\lambda_n)  = w_n$ for all $n$ must have a non-trivial inner factor.
\end{Theorem}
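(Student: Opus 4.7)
The plan is to prove the contrapositive: assume that some $\phi \in N^{+}$ with $\phi(\lambda_n) = w_n$ has \emph{trivial} inner factor, i.e., $\phi$ is outer, and deduce that $(1-|\lambda_n|)\log|w_n| \to 0$. Since any outer function in $N^{+}$ has $\log|\phi| \in L^1(\T)$, the representation \eqref{outerfunctions88u} yields the Poisson formula
\[
\log|\phi(z)| = \int_{\T} \frac{1-|z|^2}{|\xi-z|^2}\,\log|\phi(\xi)|\,dm(\xi),
\]
and evaluating at $z = \lambda_n$ writes $\log|w_n|$ as the Poisson integral of a fixed $L^1$ function against the point $\lambda_n$.

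The heart of the matter is the following lemma: if $W \in L^1(\T)$ and $P[W]$ denotes its Poisson integral, then $(1-|z|)\,P[W](z) \to 0$ as $|z| \to 1$. I would prove it by a standard density argument. Given $\varepsilon > 0$, choose $g \in C(\T)$ with $\|W - g\|_{L^1} < \varepsilon$. Since $g$ is bounded, $|P[g](z)| \leq \|g\|_\infty$, so trivially $(1-|z|)\,|P[g](z)| \to 0$. For the remainder, the pointwise estimate $P(z,\xi) \leq (1+|z|)/(1-|z|)$ yields
\[
(1-|z|)\,|P[W-g](z)| \;\leq\; (1+|z|)\,\|W-g\|_{L^1} \;<\; 2\varepsilon,
\]
and letting $|z| \to 1$ followed by $\varepsilon \to 0$ gives the lemma.

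Applying this lemma to $W = \log|\phi|$, and recalling that an interpolating sequence satisfies $|\lambda_n| \to 1$, we obtain
\[
(1-|\lambda_n|)\log|w_n| \;=\; (1-|\lambda_n|)\log|\phi(\lambda_n)| \;\longrightarrow\; 0,
\]
contradicting the standing hypothesis that this limit is not zero. Hence $\phi$ cannot be outer, so its inner factor is nontrivial. I expect the only delicate step to be the Poisson integral lemma; the bound $(1-|z|)\,\|P(z,\cdot)\|_\infty \leq 2$ is crude but becomes harmless once $L^1$ approximation by continuous (hence bounded) functions is invoked. Note that the full interpolating hypothesis is not needed in this direction beyond $|\lambda_n| \to 1$, which is automatic since $\delta(\Lambda) > 0$ forces the sequence to accumulate only on $\T$.
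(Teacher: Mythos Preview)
Your argument is correct. Both you and the paper reduce the theorem to the same lemma---that $(1-|z|)\log|\phi(z)|\to 0$ for every outer $\phi$, equivalently that $(1-|z|)\,P[W](z)\to 0$ for $W\in L^1(\T)$---and then invoke $|\lambda_n|\to 1$. The difference lies in how the lemma is proved. The paper argues by splitting $\T$ into the level set $E_a=\{|\phi|>a\}$ and its complement, obtaining $\varlimsup (1-|z|)\log|\phi(z)|\le 0$, and then applies the same bound to the outer function $1/\phi$ to get the matching $\varliminf\ge 0$. Your density argument (approximate $W$ in $L^1$ by a continuous $g$, bound the remainder via $(1-|z|)\|P(z,\cdot)\|_\infty\le 2$) is shorter, handles both signs of $W$ simultaneously, and applies to an arbitrary real $L^1$ function rather than only to $\log|\phi|$ for outer $\phi$; the paper's approach, by contrast, makes the role of outerness (that $1/\phi$ is again outer) more visible. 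Either way the theorem follows immediately.
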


\begin{Example} If
$$
w_n := e^{-\frac{1}{1-|\lambda_n|}}, \quad n \geq 1,
$$
then any interpolating $\phi \in N^{+}$ for $\{w_n\}$ is not outer. 
\end{Example}

This result will follow from the lemma below which is probably folklore but we include a proof for completeness. 

\begin{Lemma} \label{L:growth-outer}
If $\varphi$ is outer, then 
\[
\lim_{|z| \to 1^{-}} (1-|z|) \log|\varphi(z)| = 0.
\]
\end{Lemma}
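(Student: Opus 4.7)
The plan is to reduce everything to a statement about Poisson integrals of $L^1$ functions on $\T$. Since $\varphi$ is outer, the formula \eqref{outerfunctions88u} gives
\[
\log|\varphi(z)| \;=\; \int_{\T}\frac{1-|z|^2}{|\xi-z|^2}\,\log|\varphi(\xi)|\,dm(\xi) \;=\; P[W](z),
\]
where $W = \log|\varphi| \in L^1(\T)$ (this integrability is exactly what it means for $\varphi$ to be outer in the Smirnov sense). So it suffices to prove the following general fact: for every $u \in L^1(\T)$,
\[
\lim_{|z|\to 1^{-}}(1-|z|)\,P[u](z)=0.
\]

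To prove this, I would argue by approximation of $u$ in $L^1(\T)$ by continuous functions. Fix $\varepsilon>0$ and split $u=v+w$ with $v\in C(\T)$ and $\|w\|_{L^1(\T)}<\varepsilon$. This is possible since $C(\T)$ is dense in $L^1(\T)$. Then $P[u]=P[v]+P[w]$ and I handle the two pieces separately.

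For the continuous piece, the maximum principle (or the elementary Poisson-kernel estimate) gives $|P[v](z)|\leq \|v\|_{\infty}$ for all $z\in\D$, hence
\[
(1-|z|)\,|P[v](z)|\;\leq\;(1-|z|)\,\|v\|_{\infty}\;\longrightarrow\;0 \qquad\text{as }|z|\to 1^{-}.
\]
For the small-$L^1$ piece, I use the crude bound coming from $|\xi-z|\geq 1-|z|$:
\[
|P[w](z)|\;\leq\;\int_{\T}\frac{1-|z|^{2}}{(1-|z|)^{2}}\,|w(\xi)|\,dm(\xi)\;=\;\frac{1+|z|}{1-|z|}\,\|w\|_{L^{1}(\T)}\;<\;\frac{2\varepsilon}{1-|z|},
\]
so $(1-|z|)|P[w](z)|<2\varepsilon$ for every $z\in\D$. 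Combining, $\limsup_{|z|\to 1^{-}}(1-|z|)|P[u](z)|\leq 2\varepsilon$, and since $\varepsilon>0$ was arbitrary the limit is $0$.

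The only subtlety to watch for is that $W=\log|\varphi|$ is an \emph{extended} real-valued function: it can take the value $-\infty$ on a set of measure zero. But integrability $W\in L^{1}(\T)$ is all that is used, and the density of $C(\T)$ in $L^{1}(\T)$ takes care of everything. Nothing else in the argument is delicate; the main thing is to organize the two estimates so that the $\varepsilon$ on the $w$-side does not depend on $z$ and the continuity on the $v$-side is used only to ensure boundedness.
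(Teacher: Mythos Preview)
Your proof is correct, and it is cleaner and more general than the paper's argument. The paper does not pass through the general statement about $L^1$ Poisson integrals; instead it fixes $a>1$, splits $\T$ into the level set $E_a=\{\,|\varphi|>a\,\}$ and its complement, bounds the $E_a$-integral by $\frac{2}{1-|z|}\int_{E_a}\log|\varphi|\,dm$ and the complementary integral by $\log a$, and then lets $a\to\infty$ to obtain $\varlimsup_{|z|\to 1^-}(1-|z|)\log|\varphi(z)|\leq 0$. The matching lower bound is then obtained by applying the same estimate to $1/\varphi$, which is again outer. Your density argument handles both directions at once via the absolute value $|P[u]|$, and it isolates the essential analytic content: the result has nothing to do with $\varphi$ being outer beyond the fact that $\log|\varphi|\in L^1(\T)$. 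The paper's approach, on the other hand, avoids invoking the density of $C(\T)$ in $L^1(\T)$ and stays entirely within elementary Poisson-kernel estimates; it also makes visible the symmetry $\varphi\leftrightarrow 1/\varphi$ that is special to outer functions.
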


\begin{proof}
Let $a > 1$ and 
$
E_a = \{\xi \in \T : |\varphi(\xi)|>a \}.
$
Then $\log|\phi| > 0$ on $E_a$ and an application of 
\begin{equation}\label{Pposequal1}
\int_{\T} \frac{1 - |z|^2}{|\xi - z|^2}\,dm(\xi) = 1, \quad z \in \D;
\end{equation}
and 
\begin{equation}\label{poissiner5}
\frac{1 - |z|^2}{|\xi - z|^2} \leq \frac{2}{1 - |z|}, \quad z \in \D,  \quad \xi \in \T.
\end{equation}
give us 
\begin{align*}
\log|\varphi(z)|  &= \int_{\T} \frac{1-|z|^2}{|z-\xi|^2} \log|\varphi(\xi)| dm(\xi)\\
&= \int_{E_a} \frac{1-|z|^2}{|z-\xi|^2} \log|\varphi(\xi)| dm(\xi) + \int_{\T\setminus E_a} \frac{1-|z|^2}{|z-\xi|^2} \log|\varphi(\xi)| dm(\xi)\\
&\leq \frac{2}{1-|z|} \int_{E_a}  \log|\varphi(\xi)| dm(\xi)
+ \log a \int_{\T\setminus E_a} \frac{1-|z|^2}{|z-\xi|^2} dm(\xi)\\
&\leq \frac{2}{1-|z|}\int_{E_a}  \log|\varphi(\xi)| dm(\xi) + \log a.
\end{align*}
Hence,
\[
(1-|z|)\log|\varphi(z)| \leq 2\int_{E_a}  \log|\varphi(\xi)| dm(\xi)+ (1-|z|)\log a, \quad z \in \D,
\]
which implies
\[
\varlimsup_{|z| \to 1^{-}}(1-|z|)\log|\varphi(z)| \leq 2\int_{E_a}  \log|\varphi(\xi)| dm(\xi).
\]
Now let $a \to +\infty$ and use the fact that $\log |\phi| \in L^1(\T)$ to deduce
\begin{equation}\label{E:limsup-phi}
\varlimsup_{|z| \to 1^{-}}(1-|z|)\log|\varphi(z)| \leq 0.
\end{equation}

Since $1/\varphi$ is also  outer , the above argument implies
\[
\varlimsup_{|z| \to 1^{-}}(1-|z|)\log|1/\varphi(z)| \leq 0,
\]
or equivalently
\begin{equation}\label{E:liminf-phi}
\varliminf_{|z| \to 1^{-}}(1-|z|)\log|\varphi(z)| \geq 0.
\end{equation}
The result now follows by comparing \eqref{E:limsup-phi} and \eqref{E:liminf-phi}.
\end{proof}

Let us comment here that when the hypothesis of Theorem \ref{T:interpol-outer-inner} is satisfied, the inner factor that appears in the interpolating function $\phi$ plays a significant role in the decay of $\phi$. 

\begin{Corollary}\label{yuhsjkfdmgsewdf}
Suppose $\{\lambda_n\}$ is interpolating and $\{w_n\} \subset \C \setminus \{0\}$ is a bounded and satisfies 
\[
\lim_{n \to \infty} (1-|\lambda_n|)\log|w_n| \ne 0.
\]
If $I_{\phi}$ is the inner factor for a $\phi \in N^{+}$ for which $\phi(\lambda_n) = w_n$ for all $n$, then 
$$\varliminf_{n \to \infty} |I_{\phi}(\lambda_n)| = 0.$$
\end{Corollary}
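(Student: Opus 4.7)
The plan is to reduce this to Lemma \ref{L:growth-outer} via the inner--outer factorization of $\phi$. Since $\phi \in N^{+}$, we may write $\phi = I_{\phi} O_{\phi}$ with $I_\phi$ inner and $O_\phi$ outer. Also, since $\{\lambda_n\}$ is an (infinite) interpolating sequence, the Carleson condition \eqref{deltaaaa} forces $\{\lambda_n\}$ to satisfy the Blaschke condition and hence $|\lambda_n| \to 1^{-}$, which is what will allow us to apply Lemma \ref{L:growth-outer} along this sequence.

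Evaluating at $\lambda_n$ and taking $\log$ of moduli,
$$\log|w_n| \;=\; \log|I_\phi(\lambda_n)| \;+\; \log|O_\phi(\lambda_n)|,$$
so
$$(1-|\lambda_n|)\log|w_n| \;=\; (1-|\lambda_n|)\log|I_\phi(\lambda_n)| \;+\; (1-|\lambda_n|)\log|O_\phi(\lambda_n)|.$$
Because $|\lambda_n|\to 1^{-}$, Lemma \ref{L:growth-outer} applied to the outer function $O_\phi$ gives $(1-|\lambda_n|)\log|O_\phi(\lambda_n)| \to 0$.

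Now argue by contradiction: suppose $\varliminf_{n\to\infty} |I_\phi(\lambda_n)| > 0$, so there is some $c \in (0,1]$ with $|I_\phi(\lambda_n)| \geq c$ for all sufficiently large $n$. Since $|I_\phi(z)|\leq 1$ on $\D$, we obtain the two-sided bound
$$\log c \;\leq\; \log|I_\phi(\lambda_n)| \;\leq\; 0$$
for $n$ large. Multiplying through by $(1-|\lambda_n|)\to 0$ forces $(1-|\lambda_n|)\log|I_\phi(\lambda_n)| \to 0$. Combined with the outer estimate above, this yields $(1-|\lambda_n|)\log|w_n| \to 0$, contradicting the hypothesis on $\{w_n\}$.

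The argument is essentially a routine application of the factorization together with Lemma \ref{L:growth-outer}; the only subtle point is observing that interpolating sequences necessarily accumulate only on $\T$, so that the outer growth bound from Lemma \ref{L:growth-outer} can be invoked along $\{\lambda_n\}$.
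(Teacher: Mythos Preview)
Your proof is correct, but it takes a different and in fact more direct route than the paper's. You split $\log|w_n|$ into the inner and outer contributions, kill the outer piece with Lemma \ref{L:growth-outer}, and squeeze the inner piece using $c \leq |I_\phi(\lambda_n)| \leq 1$. The paper instead argues by contradiction through Theorem \ref{IT001}: assuming $|I_\phi(\lambda_n)| \geq \delta > 0$, it notes that $\{I_\phi(\lambda_n)\}$ is bounded and bounded away from zero, so Theorem \ref{IT001} supplies a bounded outer $\psi$ with $\psi(\lambda_n) = I_\phi(\lambda_n)$; then $F_\phi\psi$ is outer and interpolates $\{w_n\}$, contradicting Theorem \ref{T:interpol-outer-inner}. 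Your argument is more elementary in that it avoids Theorem \ref{IT001} altogether and relies only on Lemma \ref{L:growth-outer} plus an immediate squeeze; the paper's version, on the other hand, makes the link to the positive interpolation result explicit, which fits the narrative of the section. Both ultimately rest on Lemma \ref{L:growth-outer}.
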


\begin{proof}
Let $\phi = F_{\phi} I_{\phi}$, where $F_{\phi}$ is outer and $I_{\phi}$ is inner. If $|I_{\phi}(\lambda_n)| \geq \delta > 0$ for all $n$, then 
$I_{\phi}(\lambda_n) = w_n/F_{\phi}(\lambda_n)$ satisfies the hypothesis of Theorem \ref{IT001} and so there is a bounded outer  $\psi$ with 
$\psi(\lambda_n) = I_{\phi}(\lambda_n)$ and so $F_{\phi} \psi$ is outer and interpolates $w_n$. This says that $w_n$ can be interpolated by an outer function -- which it can not. 
\end{proof}

\begin{Remark}
The above says that a subsequence of $\{\lambda_n\}$ must approach 
$$\Big\{\xi \in \T: \varliminf_{z \to \xi} |I_{\phi}(z)| = 0\Big\},$$
the boundary spectrum of the inner factor $I_{\phi}$. This set will consist of the accumulation of the zeros of the Blaschke factor of $I_{\phi}$ as well as the support of the singular measure associated with the singular inner inner factor of $I_{\phi}$ \cite[p.~152]{MR3526203}. 
\end{Remark}

\section{Negative results -- existence of a Blaschke product}

This next result says that under the right circumstances, any Smirnov interpolating must have a Blaschke factor. 

\begin{Theorem}\label{Blakshkefactor}
Suppose $\{\lambda_n\}$ is interpolating and $\{w_n\} \subset \C \setminus \{0\}$ is bounded and satisfies  
\[
\varlimsup_{n \to \infty} (1-|\lambda_n|)|\log|w_n|| = \infty.
\]
Then any $\phi \in N^{+}$ for which $\phi(\lambda_n) = w_n$ for all $n \geq 1$ must have a Blaschke factor. 
\end{Theorem}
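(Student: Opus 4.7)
The plan is a proof by contradiction that parallels Theorem \ref{T:interpol-outer-inner} but squeezes more from the Poisson representation. Suppose $\phi \in N^{+}$ satisfies $\phi(\lambda_n) = w_n$ for all $n$ and has no Blaschke factor; then its canonical factorization reduces to $\phi = S_\mu F$, where $S_\mu$ is the singular inner function associated with some finite positive singular measure $\mu$ on $\T$ (possibly $\mu = 0$, in which case $S_\mu$ is a unimodular constant) and $F$ is outer. I aim to show that, under this assumption, $(1-|z|)\,|\log|\phi(z)||$ stays bounded as $|z| \to 1^{-}$, which will contradict the hypothesis once we put $z = \lambda_n$. One preliminary observation: since $\{\lambda_n\}$ is interpolating, the sequence is pseudohyperbolically separated and can have no accumulation point in $\D$, so $|\lambda_n| \to 1$.

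The proof then rests on two one-line estimates. First, for the singular inner factor, the standard formula
$$\log|S_\mu(z)| = -\int_{\T} \frac{1-|z|^2}{|\xi - z|^2}\,d\mu(\xi)$$
combined with the Poisson kernel bound \eqref{poissiner5} gives the clean inequality
$$(1-|z|)\,|\log|S_\mu(z)|| \leq 2\mu(\T), \quad z \in \D.$$
Second, for the outer factor, Lemma \ref{L:growth-outer} yields $\lim_{|z|\to 1^{-}}(1-|z|)\log|F(z)| = 0$, which forces $(1-|z|)\,|\log|F(z)|| \to 0$ as $|z| \to 1^{-}$.

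Since $\log|\phi| = \log|S_\mu| + \log|F|$ pointwise in $\D$, the triangle inequality and the two estimates give
$$\varlimsup_{|z| \to 1^{-}} (1-|z|)\,|\log|\phi(z)|| \leq 2\mu(\T) < \infty.$$
Evaluating along $z = \lambda_n$ and using $|\lambda_n| \to 1$ directly contradicts the assumption $\varlimsup_{n \to \infty}(1-|\lambda_n|)\,|\log|w_n|| = \infty$, so $\phi$ must carry a Blaschke factor. The main thing worth double-checking is the applicability of Lemma \ref{L:growth-outer} in this setting: $F$ is only an outer factor in $N^{+}$ rather than a bounded outer function, but the lemma's proof uses only $\log|F| \in L^1(\T)$ and the Poisson representation \eqref{outerfunctions88u}, both of which hold for any outer function; so this is a minor point rather than a genuine obstacle, and I do not expect any other serious difficulty.
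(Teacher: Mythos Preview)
Your proof is correct and follows essentially the same approach as the paper: both decompose a Blaschke-factor-free $\phi \in N^{+}$ into its outer and singular inner parts, invoke Lemma \ref{L:growth-outer} for the outer piece, and use the Poisson kernel bound \eqref{poissiner5} to get $(1-|z|)\,|\log|S_\mu(z)|| \leq 2\mu(\T)$ for the singular piece. The paper packages the combined estimate as a standalone lemma on zero-free $N^{+}$ functions, but the content is identical to what you wrote.
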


This says, for example,  that for an interpolating $\{\lambda_n\}$ any $\phi \in H^{\infty}$ for which 
$$\phi(\lambda_n) = \exp\Big(-\frac{1}{(1 - |\lambda_n|)^{2}}\Big)$$ (and such $\phi$ exist by Carleson's theorem) must have a Blaschke factor. 

The proof of this theorem follows from the following lemma on zero-free Smirnov functions. Any zero-free $\phi \in N^{+}$ can be written as 
\begin{equation}\label{zerofreestructure}
\phi(z) = \exp\Big(\int_{\T} \frac{\xi + z}{\xi - z} W(\xi) dm(\xi)\Big)  \exp\Big(-\int_{\T} \frac{\xi + z}{\xi - z}d \mu(\xi)\Big),\end{equation}
where $W$ is a real-valued  integrable function and $\mu$ is a positive measure that is singular with respect to Lebesgue measure $m$.

\begin{Lemma}
If $\phi \in N^{+}$ and zero free, then 
$$\varlimsup_{|z| \to 1^{-}} (1 - |z|) \big|\log |\phi(z)|\big| < \infty.$$
\end{Lemma}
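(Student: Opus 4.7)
The plan is to write $\log|\phi(z)|$ as a single integral against a real bounded measure and then apply the elementary bound on the Poisson kernel $P_z(\xi) = (1-|z|^2)/|\xi-z|^2 \leq 2/(1-|z|)$ from \eqref{poissiner5}.

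Concretely, starting from the factorization \eqref{zerofreestructure}, taking the real part gives
\[
\log|\phi(z)| = \int_{\T} P_z(\xi) W(\xi)\,dm(\xi) - \int_{\T} P_z(\xi)\,d\mu(\xi),
\]
where $W \in L^1(\T, dm)$ and $\mu$ is a finite positive singular measure on $\T$. Thus by the triangle inequality,
\[
|\log|\phi(z)|| \leq \int_{\T} P_z(\xi) |W(\xi)|\,dm(\xi) + \int_{\T} P_z(\xi)\,d\mu(\xi).
\]

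Next I would invoke the uniform pointwise estimate $P_z(\xi) \leq 2/(1-|z|)$ valid for all $z \in \D$ and all $\xi \in \T$. Multiplying through by $(1-|z|)$ yields
\[
(1-|z|)|\log|\phi(z)|| \leq 2\int_{\T} |W(\xi)|\,dm(\xi) + 2\mu(\T) = 2\|W\|_{L^1(\T)} + 2\mu(\T),
\]
and the right-hand side is a finite constant independent of $z$. Taking the $\varlimsup$ as $|z|\to 1^{-}$ gives the desired bound.

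There is no real obstacle: the only subtlety is noting that although Lemma \ref{L:growth-outer} extracts the sharper conclusion that the limit equals zero for purely outer $\phi$, the singular inner factor of a zero-free Smirnov function can genuinely contribute a nonzero term of order $1/(1-|z|)$, so one cannot hope for better than boundedness in general. Hence the crude Poisson bound \eqref{poissiner5} is both the natural and sufficient tool here.
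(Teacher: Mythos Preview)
Your proof is correct and follows essentially the same strategy as the paper: split $\log|\phi|$ via the factorization \eqref{zerofreestructure} into the Poisson integral of $W$ and the Poisson integral of the singular measure $\mu$, then control each piece. The only difference is that the paper handles the outer piece by invoking Lemma~\ref{L:growth-outer} (which gives the sharper conclusion that $(1-|z|)\int P_z W\,dm \to 0$) and reserves the crude bound \eqref{poissiner5} for the singular part, whereas you apply \eqref{poissiner5} uniformly to both terms via $|W|\,dm + d\mu$. Your route is slightly more direct and self-contained; the paper's route isolates more precisely that any failure of the limit to be zero is attributable to the singular inner factor, which is consistent with the narrative leading into Theorem~\ref{Blakshkefactor}.
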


\begin{proof}
From \eqref{zerofreestructure} we have 
$$\log|\phi(z)| = \int_{\T}  \frac{1-|z|^2}{|z-\xi|^2} W(\xi) dm(\xi) - \int_{\T}  \frac{1-|z|^2}{|z-\xi|^2}  d \mu(\xi).$$
The proof of Lemma \ref{L:growth-outer} shows that 
$$\lim_{|z| \to 1^{-}} (1 - |z|) \int_{\T}  \frac{1-|z|^2}{|z-\xi|^2} W(\xi) dm(\xi) = 0.$$ From \eqref{poissiner5} we have
$$0 \leq ( 1- |z|)  \int_{\T}  \frac{1-|z|^2}{|z-\xi|^2}  d \mu(\xi) \leq 2 \int_{\T} d \mu =2  \mu(\T).$$
Combine these two facts to prove the result. 
\end{proof}

\section{A growth rate characterization}

 We know  from Corollary \ref{yuhsjkfdmgsewdf} that 
if $\{w_n\}$ can be interpolated by an outer  function, then 
$$\lim_{n \to \infty} (1- \lambda_n) \log |w_n| = 0.$$
What is the decay rate of $(1 - \lambda_n) \log |w_n|$? Here we focus our attention on the case when $\{\lambda_n\} \subset (0, 1)$. Though it does not play a role in our results, it is known \cite[p. ~156]{Duren} that $\{\lambda_n\} \subset (0, 1)$ is interpolating if and only if there is a $0 < c < 1$ such that 
$$(1 - \lambda_{n + 1}) \leq c (1 - \lambda_{n}), \quad n \geq 1.$$
Such sequences are called {\em exponential sequences}. Naively speaking, the following result says that the decay  rate of $(1- \lambda_n) \log |w_n|$ is controlled by an absolutely continuous function. The sharpness of this observation will be studied in Theorem \ref{growthragfetd6dx6}.

\begin{Theorem}\label{MTone}
Suppose $\{\lambda_n\} \subset (0, 1)$ is interpolating and $\{w_n\} \subset \C \setminus \{0\}$ is a bounded  with 
$$M := \sup_{n \geq 1} |w_n|.$$
Suppose there is an outer function $\phi$ for which $\phi(\lambda_n) = w_n$ for all $n$. 
Then there is a positive, decreasing, integrable function $h$ on $[0, 1]$ such that 
$$- (1 - \lambda_n) \log\Big|\frac{w_n}{M}\Big| \leq \int_{0}^{1 - \lambda_n} h(t) dt, \quad n \geq 1.$$
\end{Theorem}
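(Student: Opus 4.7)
My plan is to reduce the outer interpolant $\phi$ to a bounded one via a Poisson integral comparison, and then dominate the resulting quantity by a Hardy--Littlewood rearrangement argument.

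Set $\Phi := \phi/M$, so that $|\Phi(\lambda_n)| \leq 1$ for every $n$. Let $W := \log|\Phi| \in L^1(\T)$ and $W_- := \max(-W, 0)$, and let $F$ be the outer function with $|F| = e^{-W_-} = \min(|\Phi|, 1)$ on $\T$. Then $F \in H^\infty$, $\|F\|_\infty \leq 1$, and comparing Poisson integrals yields
\[
-\log|\Phi(\lambda)| \;\leq\; \int_\T P_\lambda W_- \,dm \;=\; -\log|F(\lambda)|, \qquad \lambda \in \D.
\]
It therefore suffices to dominate $\psi(\epsilon) := -\epsilon\log|F(1-\epsilon)| = \int_\T \rho_\epsilon\,g\,dm$ by $\int_0^\epsilon h(t)\,dt$, where $g := W_- \geq 0$ lies in $L^1(\T)$ and $\rho_\epsilon := \epsilon P_{1-\epsilon}$.

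Since $\rho_\epsilon(e^{it})$ is even in $t$ and decreasing in $|t|$ on $[0,\pi]$, its decreasing rearrangement on $[0,1]$ is $\rho_\epsilon^*(s) = \rho_\epsilon(e^{i\pi s})$. The explicit Poisson formula combined with the elementary inequality $\sin(\pi s/2) \geq s$ on $[0,1]$ yields
\[
\rho_\epsilon^*(s) \;\leq\; \min\!\Bigl(2,\ \tfrac{\epsilon^2}{2(1-\epsilon)\,s^2}\Bigr).
\]
By the Hardy--Littlewood rearrangement inequality, $\psi(\epsilon) \leq \int_0^1 \rho_\epsilon^*(s)\,g^*(s)\,ds$. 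Splitting at the crossover $s_\epsilon := \epsilon/(2\sqrt{1-\epsilon})$ and using the bound $g^*(s_\epsilon) \leq G^*(s_\epsilon)/s_\epsilon$ (valid because $g^*$ is decreasing, with $G^*(t) := \int_0^t g^*(u)\,du$), each of the two pieces is controlled by $2 G^*(s_\epsilon)$, yielding $\psi(\epsilon) \leq 4G^*(s_\epsilon)$ whenever $s_\epsilon \leq 1$. For $\epsilon$ close to $1$, combine the crude bound $\psi(\epsilon) \leq 2\|g\|_{L^1}$ (from $\rho_\epsilon^* \leq 2$) with the concavity of $G^*$ (so that $G^*(\epsilon) \geq \epsilon\|g\|_{L^1}$) to obtain the uniform estimate $\psi(\epsilon) \leq 4 G^*(\epsilon)$ for all $\epsilon \in (0,1)$.

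Finally, set $h(t) := 4 g^*(t) + 1$; this is positive, decreasing, and integrable on $[0,1]$, with
\[
\int_0^\epsilon h(t)\,dt \;\geq\; 4G^*(\epsilon) \;\geq\; \psi(\epsilon) \;\geq\; -\epsilon\log|\Phi(1-\epsilon)|.
\]
Specializing $\epsilon = 1-\lambda_n$ (so $\Phi(\lambda_n) = w_n/M$) delivers the theorem. The principal difficulty lies in establishing the uniform estimate $\psi(\epsilon) \leq 4 G^*(\epsilon)$ across the whole interval $(0,1)$---in particular in the regime $s_\epsilon > \epsilon$, which is resolved by interpolating the rearrangement bound with the crude pointwise bound on $\rho_\epsilon^*$ via concavity of $G^*$.
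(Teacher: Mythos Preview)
Your proof is correct and follows essentially the same route as the paper: normalize by $M$, dominate $-(1-r)\log|\Phi(r)|$ by the Poisson integral of the negative part $g = (\log|\Phi|)_-$, apply the Hardy--Littlewood rearrangement inequality (using that $P_r$ is its own symmetric rearrangement), and take $h$ to be a constant multiple of the decreasing rearrangement $g^*$. The only difference is in how the Poisson--kernel estimate is obtained: the paper proves the two-sided bound $A_h(r) \asymp \int_0^{1-r} h$ via the second mean value theorem for integrals, whereas you use the explicit pointwise bound $\rho_\epsilon^* \leq \min(2,\epsilon^2/(2(1-\epsilon)s^2))$ and split at the crossover, patching the large-$\epsilon$ regime with the concavity of $G^*$; both yield the same conclusion with slightly different constants.
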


To prepare for the proof, we require a few comments on the growth of the Poisson kernel. 
We begin with the following two normalizing assumptions on $h$: 
\begin{equation}\label{RC}
\mbox{$h$ is right continuous};
\end{equation}
\begin{equation}\label{hiszero}
\mbox{$h$ is extended to $[0, \pi]$ by setting $h(x) = 0$ for $x \in [1, \pi]$}.
\end{equation}
The right continuity can be assumed since $h$ is monotone and thus has at most a countable number of jumps. Since the behavior around the origin is our main concern, extending the definition of $h$ to $[0, \pi]$ is merely for aesthetic purposes when working with Poisson integrals below. 
Let 
$$P_{r}(t) = \frac{1 - r^2}{1 - 2 r \cos t + r^2}, \quad 0 \leq r < 1, \quad -\pi \leq t \leq \pi,$$
be the standard Poisson kernel. We wish to examine the function 
\begin{equation}\label{arrrrrr}
A_h(r) = (1 - r) \int_{-\pi}^{\pi} P_{r}(t) h(|t|) \frac{d t}{2 \pi}.
\end{equation}

For $r \in [0, 1)$ and $t \in [-\pi, \pi]$ we have 
\begin{equation}\label{99uuUU}
\frac{(1 - r)(1 - r^2)}{1 - 2 r \cos t + r^2} = \frac{(1 + r) (1 - r)^2}{(1 - r)^2 + 4 r \sin^{2}(t/2)}.
\end{equation}
This yields
\begin{align*}
A_h(r) & = \int_{-\pi}^{\pi} \frac{(1 - r)(1 - r^2)}{1 - 2 r \cos t + r^2} h(|t|) \frac{dt}{2 \pi}\\
& = 2 \int_{0}^{\pi}   \frac{(1 + r) (1 - r)^2}{(1 - r)^2 + 4 r \sin^{2}(t/2)} h(t) \frac{dt}{2 \pi}\\
& \geq  2 \int_{0}^{1 - r}   \frac{(1 + r) (1 - r)^2}{(1 - r)^2 + 4 r \sin^{2}(t/2)} h(t) \frac{dt}{2 \pi}
\end{align*}
Note that 
$$t^2 \sin^{2}(\tfrac{1}{2}) \leq \sin^{2}(\tfrac{t}{2}) \leq \tfrac{1}{4} t^2$$ and so 
\begin{align*}
\frac{1}{\pi} \int_{0}^{1 - r}   \frac{(1 + r) (1 - r)^2}{(1 - r)^2 + 4  \sin^{2}(t/2)} h(t) dt & \geq \frac{1}{ \pi} \int_{0}^{1 - r} \frac{(1 - r)^2}{(1 - r)^2 + t^2} h(t) dt\\
& \geq \frac{1}{ \pi} \int_{0}^{1 - r} \frac{(1 - r)^2}{(1 - r)^2 + (1 - r)^2} h(t) dt\\
& = \frac{1}{2 \pi} \int_{0}^{1 - r} h(t) dt.
\end{align*}
In summary,
\begin{equation}\label{lower}
A_h(r) \geq \frac{1}{2 \pi} \int_{0}^{1 - r} h(t) dt.
\end{equation}

To obtain an upper bound, note that $A(r)$ is equal to the sum of
$$
 \frac{1}{ \pi} \int_{0}^{1 - r} \frac{(1 - r)(1 - r^2)}{(1 - r)^2 + 4 r \sin^{2}(t/2)} h(t) dt$$ 
 and
$$\frac{1}{ \pi} \int_{1 - r}^{\pi} \frac{(1 - r)(1 - r^2)}{1 - 2 r \cos t + r^2} h(t) dt.$$
 For the first integral, observe  that
 \begin{align*}  
 \frac{1}{ \pi} \int_{0}^{1 - r}  \frac{(1 + r) (1 - r)^2}{(1 - r)^2 + 4 r \sin^{2}(t/2)} h(t) dt
 &  \leq  \frac{1}{ \pi} \int_{0}^{1- r}  \frac{(1 + r) (1 - r)^2}{(1 - r)^2 } h(t) dt\\
 & \leq \frac{2}{ \pi} \int_{0}^{1 - r} h(t)dt.
 \end{align*}
 For the second integral we recall from \eqref{hiszero} that $h(\pi) = 0$ and we use the second mean value theorem \cite{MR2960787} for integrals, along with the right continuity of $h$ from \eqref{RC},  to see that 
$$ \frac{1}{ \pi} \int_{1 - r}^{\pi} \frac{(1 - r)(1 - r^2)}{1 - 2 r \cos t + r^2} h(t) dt = \frac{1}{\pi} h(1 - r) \int_{1 - r}^{t_{0}}  \frac{(1 - r)(1 - r^2)}{1 - 2 r \cos t + r^2}dt$$ for some $t_{0} \in [1 - r, \pi]$. Notice that 
\begin{align*}
\frac{1}{ \pi} h(1 - r) \int_{1 - r}^{t_{0}}  \frac{(1 - r)(1 - r^2)}{1 - 2 r \cos t + r^2}dt & =  \frac{1}{\pi} (1 - r) h(1 - r)  \int_{1 - r}^{t_{0}} P_{r}(t) dt \\
& \leq  (1 - r) h(1 - r).
\end{align*}
In the last step note the use of  
$$\int_{1 - r}^{t_0} P_{r}(t) dt \leq \int_{0}^{\pi} P_{r}(t) dt = \pi.$$
We now use the fact that $h$ is decreasing to obtain
$$ \int_{0}^{1 - r} h(t) dt \geq  h(1 - r) \int_{0}^{1 - r} dt = (1 - r) h(1 - r).$$
Put this all together to get 
\begin{equation}\label{puy6vvVV}
A_h(r) \leq \tfrac{2 + \pi}{\pi} \int_{0}^{1 - r} h(t) dt.
\end{equation}
Thus, combining \eqref{lower}  and \eqref{puy6vvVV} we have the summary estimate 
\begin{equation}\label{summaruyarrrr}
\frac{1}{2 \pi} \int_{0}^{1 - r} h(t) dt \leq A_h(r) \leq \tfrac{2 + \pi}{\pi} \int_{0}^{1 - r} h(t) dt.
\end{equation}

An important tool for our next step is  the {\em symmetric decreasing rearrangement}. If $E$ is a measurable subset of $  [-\pi, \pi]$ let 
$$E^{*} = (-\tfrac{1}{2} |E|, \tfrac{1}{2}|E|)$$
be the interval centered about $0$ for which $|E| = |E^{*}|$, where $|\cdot|$ is Lebesgue measure on $[-\pi, \pi]$. For $f \in L^{1}[-\pi, \pi]$ with $f \geq 0$, define
$$f^{*}(x) = \int_{0}^{\infty} \chi_{\{f > t\}^{*}}(x) dt, \quad x \in [-\pi, \pi].$$
This function $f^{*}$ satisfies $f^{*}(x) = f^{*}(|x|)$ on $[- \pi, \pi]$ (i.e., symmetric), is non-increasing on $[0, \pi]$, and has the same integral as $f$. The important fact used here is the following \cite[Ch.~10]{MR0046395}. 

\begin{Lemma}[Hardy--Littlewood]\label{hhhllll}
For nonnegative and measurable $f, g$ we have 
$$\int_{-\pi}^{\pi} f(x) g(x)\,dx \leq \int_{-\pi}^{\pi} f^{*}(x) g^{*}(x)\,dx.$$
\end{Lemma}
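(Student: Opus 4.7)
The plan is to reduce the inequality to the case of indicator functions via the layer-cake representation, and then to exploit the fact that the symmetric decreasing rearrangement of a set is an interval centered at the origin, so two such rearranged sets are nested.

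First I would recall the layer-cake formula: for any nonnegative measurable $f$ on $[-\pi,\pi]$,
\[
f(x) = \int_{0}^{\infty} \chi_{\{f>s\}}(x)\, ds,
\]
and similarly $g(x) = \int_{0}^{\infty} \chi_{\{g>t\}}(x)\, dt$. Moreover, the definition of $f^*$ given just before the lemma is already in layer-cake form, with $\{f^* > s\} = \{f>s\}^*$ (and likewise for $g^*$). Multiplying and applying Fubini yields
\[
\int_{-\pi}^{\pi} f(x) g(x)\, dx = \int_{0}^{\infty}\!\!\int_{0}^{\infty} \bigl|\{f>s\} \cap \{g>t\}\bigr|\, ds\, dt,
\]
and analogously
\[
\int_{-\pi}^{\pi} f^*(x) g^*(x)\, dx = \int_{0}^{\infty}\!\!\int_{0}^{\infty} \bigl|\{f>s\}^* \cap \{g>t\}^*\bigr|\, ds\, dt.
\]

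Next I would prove the pointwise (in $s, t$) inequality
\[
\bigl|\{f>s\} \cap \{g>t\}\bigr| \leq \bigl|\{f>s\}^* \cap \{g>t\}^*\bigr|.
\]
For arbitrary measurable sets $A, B \subset [-\pi,\pi]$ one trivially has $|A \cap B| \leq \min(|A|,|B|)$. On the other hand, $A^*$ and $B^*$ are intervals centered at the origin with $|A^*|=|A|$ and $|B^*|=|B|$, so one is contained in the other and
\[
|A^* \cap B^*| = \min(|A^*|,|B^*|) = \min(|A|,|B|).
\]
Thus $|A\cap B| \leq |A^* \cap B^*|$, which is exactly what we need.

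Integrating this pointwise bound in $(s,t)$ and comparing with the two double integrals above gives the lemma. There is no real obstacle here; the only thing to be careful about is ensuring the layer-cake representation of $f^*$ (already built into its definition) matches $\{f>s\}^*$, which follows because the sets $\{f>s\}$ are nested as $s$ decreases and rearrangement is monotone with respect to inclusion up to null sets.
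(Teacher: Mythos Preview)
Your argument is correct and is in fact the standard proof of this classical inequality. Note, however, that the paper does not give its own proof of this lemma: it simply states the result and cites Hardy--Littlewood--P\'olya \cite[Ch.~10]{MR0046395}. So there is no proof in the paper to compare against; you have supplied the standard layer-cake argument that one finds in the literature.

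One minor remark: your final caveat about checking that $\{f^*>s\}=\{f>s\}^*$ is not actually needed for the computation. Since the paper \emph{defines} $f^*(x)=\int_0^\infty \chi_{\{f>s\}^*}(x)\,ds$, multiplying this directly against the analogous expression for $g^*$ and applying Fubini already gives
\[
\int_{-\pi}^{\pi} f^*(x)g^*(x)\,dx=\int_0^\infty\!\!\int_0^\infty \bigl|\{f>s\}^*\cap\{g>t\}^*\bigr|\,ds\,dt,
\]
with no intermediate identification of level sets required.
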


If $f$ is positive and symmetric on $[-\pi, \pi]$ and $f$ is decreasing on $[0, \pi]$, then for each $t$ the set  $\{f > t\}$ is the interval 
$$\big(- \tfrac{1}{2} |\{f > t\}|,  \tfrac{1}{2}|\{f > t\}|\big).$$
In other words, $\{f > t\}^{*} = \{f > t\}$. 
By the layer cake representation of $f$ we have
\begin{align*}
f(x) & = \int_{0}^{\infty} \chi_{\{f > t\}}(x)dt\\
& = \int_{0}^{\infty} \chi_{\{f > t\}^{*}}(x) dt\\
& = f^{*}(x).
\end{align*}
Conclusion: If $f$ is positive, symmetric ($f(t) = f(|t|)$ on $[-\pi, \pi]$), and decreasing on $[0, \pi]$, then $f^{*} = f$ (almost everywhere).

\begin{proof}[Proof of Theorem \ref{MTone}]
If $\phi$ is outer and $\phi(\lambda_n) = w_n$ for all $n \geq 1$, then $\psi = \phi/M$ is outer and interpolates $w_n/M$. 
For $0 < r < 1$ we have 
\begin{align*}
- (1 - r) \log |\psi(r)| & = - (1 - r) \int_{-\pi}^{\pi} P_{r}(t) \log|\psi(e^{i t})| \frac{dt}{2 \pi}\\
& =  - (1 - r) \int_{-\pi}^{\pi} P_{r}(t) \max(0, \log|\psi(e^{i t})|) \frac{dt}{2 \pi}\\
& \quad -  (1 - r) \int_{-\pi}^{ \pi} P_{r}(t) \min(0, \log|\psi(e^{i t})|) \frac{dt}{2 \pi}\\
& \leq (1 - r) \int_{-\pi}^{\pi} P_{r}(t) k(t) \frac{dt}{2 \pi},
\end{align*}
where 
$k = -\min(0, \log |\psi|)$ is nonnegative and integrable on $[-\pi, \pi]$. 
 
 Apply the Hardy--Littlewood estimate (Lemma \ref{hhhllll})  to \eqref{arrrrrr} with $f = k$ and $g = P_{r}$ (which is already symmetric and so $g = g^{*}$ -- see the discussion above) to obtain the estimate 
 $$(1 - r) \int_{-\pi}^{\pi} P_{r}(t) k(t) \frac{dt}{2 \pi} \leq (1 - r) \int_{-\pi}^{\pi} P_{r}(t) k^{*}(t) \frac{dt}{2 \pi}.$$

By \eqref{summaruyarrrr} (note the use of the fact that $k^{*}(|t|) = k^{*}(t)$)
$$-(1 - r) \log|\psi(r)| \leq \frac{2 + \pi}{\pi} \int_{0}^{1 - r} k^{*}(t) dt.$$
Insert  $r = \lambda_n$ into the above inequality to complete the proof. 
\end{proof}

Next we improve Theorem \ref{MTone} with this sharpness result.




\begin{Theorem}\label{growthragfetd6dx6}
Suppose $\{\lambda_n\} \subset (0, 1)$ is interpolating and $h$ is a positive, decreasing, integrable function on $[0, 1]$. 
 If $\{w_n\} \subset \C \setminus \{0\}$ is bounded and satisfies  
$$-(1 - \lambda_n) \log |w_n| \asymp \int_{0}^{1 - \lambda_n} h(t) dt,$$ then there is a bounded outer function $\psi$ such that 
$$-(1 - \lambda_n) \log \psi(\lambda_n) \asymp \int_{0}^{1 - \lambda_n} h(t) dt.$$
\end{Theorem}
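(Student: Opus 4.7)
The plan is to produce $\psi$ explicitly by prescribing its boundary modulus so that the upper estimate in Theorem~\ref{MTone} is saturated. Given the positive decreasing integrable function $h$ on $[0,1]$, extended to $[0,\pi]$ by zero as in \eqref{hiszero}, I would set $W(e^{it}):=-h(|t|)$ and let $\psi$ be the outer function \eqref{outerfunctions88u} with weight $W$. Since $h\in L^1[0,1]$ forces $W\in L^1(\T)$, and since $W\leq 0$ everywhere, the Poisson representation of $\log|\psi|$ gives $|\psi(z)|\leq 1$ throughout $\D$; hence $\psi$ is automatically a bounded outer function.

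Next I would verify the two-sided growth estimate along $(0,1)$, where the $\lambda_n$ sit. The key observation is that $W$ is even in $\arg\xi$; decomposing
\[
\tfrac{\xi+r}{\xi-r}=P_r(t)+i\,\tfrac{2r\sin t}{|e^{it}-r|^2}
\]
and integrating against this even $W$ kills the imaginary part, so $\psi(r)>0$ on $(0,1)$ and $\log\psi(r)$ is literally the Poisson integral of $W$. Multiplying through by $1-r$ then gives
\[
-(1-r)\log\psi(r)=(1-r)\int_{-\pi}^{\pi}P_r(t)\,h(|t|)\,\tfrac{dt}{2\pi}=A_h(r),
\]
and the two-sided bound \eqref{summaruyarrrr}, already established during the proof of Theorem~\ref{MTone}, immediately yields
\[
\tfrac{1}{2\pi}\int_0^{1-r}h(t)\,dt\;\leq\;-(1-r)\log\psi(r)\;\leq\;\tfrac{2+\pi}{\pi}\int_0^{1-r}h(t)\,dt.
\]
Specializing to $r=\lambda_n$ is then the whole content of the theorem.

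I do not expect a serious obstacle. The substantive work, the Poisson-kernel estimate \eqref{summaruyarrrr}, is already in hand from Theorem~\ref{MTone}, and the present argument simply turns that estimate around: Theorem~\ref{MTone} reduced $-(1-\lambda_n)\log|\phi(\lambda_n)|$ to an integral of this shape by passing to a symmetric decreasing rearrangement, and here one constructs a $\psi$ whose boundary data is already such a rearranged-looking function. The hypothesis on $\{w_n\}$ plays no active role in the construction of $\psi$; it appears only so that the rate $\int_0^{1-\lambda_n}h(t)\,dt$ against which sharpness is being measured is actually attained by some bounded target sequence.
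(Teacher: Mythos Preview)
Your argument is correct and follows the paper's approach almost exactly: both construct the bounded outer function with boundary modulus $e^{-h(|t|)}$ and then invoke the two-sided Poisson estimate \eqref{summaruyarrrr}. The only difference is the final step: the paper appeals to Proposition~\ref{comp886bb} to pass from $|\phi(\lambda_n)|$ to a $\psi$ with $\psi(\lambda_n)=|\phi(\lambda_n)|$, whereas you observe directly that the evenness of $h(|t|)$ forces the Herglotz integral to be real on $(0,1)$, so the constructed function is already positive there and no further adjustment is needed---a small but genuine simplification. (Minor quibble: the imaginary part of $(\xi+r)/(\xi-r)$ carries a minus sign, but since you only use that it is odd in $t$, this does not affect the argument.)
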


In the statement above, $A_{n} \asymp B_{n}$ means there are positive constants $c_1$ and $c_2$, independent of $n$, such that $c_1 A_{n} \leq B_n \leq c_2 A_n$ for all $n$. 

\begin{proof}
For $0 < r < 1$,  \eqref{summaruyarrrr} yields 
$$A_h(r) \asymp \int_{0}^{1 - r} h(t)dt.$$

If $\phi$ is the (bounded) outer function with 
\begin{equation}\label{67ygvvFGVFTYGFTYUHJ}
|\phi(e^{i t})| = e^{-h(|t|)}
\end{equation}
 for almost every $t \in [-\pi, \pi]$ (see \eqref{eeewwww}), then 
 \begin{align*}
 - (1 - r) \log |\phi(r)|  & = - (1 - r) \int_{-\pi}^{\pi} P_{r}(t) \log|\phi(e^{i t})| \frac{dt}{2 \pi}\\
 & = A_h(r)
  \asymp \int_{0}^{1 - r} h(t)dt. 
\end{align*}

With $r = \lambda_n$ we have 
$$ -(1 - \lambda_n) \log |\phi(\lambda_n)| \asymp \int_{0}^{1 - \lambda_n} h(t)dt.$$
Now apply Proposition \ref{comp886bb} to produce a bounded outer function $\psi$ with $\psi(\lambda_n) = |\phi(\lambda_n)|$. 
\end{proof}

\section{More delicate interpolation}

Given $h$ as in Theorem \ref{growthragfetd6dx6}, there is a bounded outer  $\phi$ such that $$\log \phi(\lambda_n) \asymp \frac{1}{1 - \lambda_n} \int_{0}^{1 - \lambda_n} h(t)\,dt, \quad n \geq 1.$$ Can we replace $\asymp$ with $=$ in the above? 
Equivalently, can we find an outer  (bounded outer) $\phi$ such that 
$$\phi(\lambda_n) = \exp\Big(-  \frac{1}{1 - \lambda_n} \int_{0}^{1 - \lambda_n} h(t) dt\Big), \quad n \geq 1?$$ 

We certainly can find 
$$d_n \in \Big[\frac{1}{2 \pi}, \frac{2 + \pi}{\pi}\Big]$$ such that 
$$\phi(\lambda_n)^{d_n} =  \exp\Big(- \frac{1}{1 - \lambda_n} \int_{0}^{1 - \lambda_n} h(t) dt\Big).$$
By Theorem \ref{IT001} and Remark \ref{detailreg0} there is a bounded outer $\psi$ with $\Re \psi > 0$ such that 
$\psi(\lambda_n) = 1/d_n$ for all $n$. 
The function 
$f = \phi^{\psi}$ is analytic on $\D$ with 
$$f(\lambda_n) =  \exp\Big(-  \frac{1}{1 - \lambda_n} \int_{0}^{1 - \lambda_n} h(t) dt\Big)$$ 
and thus performs the interpolation. 
But of course we need to check that $f$ is outer (bounded outer). 

Indeed this is something that needs checking since if $\phi$ and $\psi$ are outer,  $f = \phi^{\psi}$ need not be outer. In fact with $\phi = e$ (constant outer function) and 
$$\psi(z) = -\frac{1 + z}{1 - z},$$ then 
$$f = \phi^{\psi} = \exp\Big(-\frac{1 + z}{1 - z}\Big)$$ is inner! Here is our result concerning when $\phi^{\psi}$ is outer (bounded outer). 

\begin{Proposition} \label{L:ftog}
Let $\phi$ be outer and $\psi$ be bounded and outer. 
\begin{enumerate}
\item[(i)] If $\arg \phi(\xi) \in L^1(\T)$, then $f = \phi^{\psi}$ is outer. 
\item[(ii)] If $\Re \psi > 0$ and $\arg \phi(\xi) \in L^{\infty}(\T)$, then $f = \phi^{\psi}$ is outer and bounded. 
\end{enumerate}
\end{Proposition}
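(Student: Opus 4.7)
The plan is to interpret $f = \phi^\psi$ as $\exp(\psi \log \phi)$, which is well defined and analytic on $\D$ because $\phi$, being outer, is nonvanishing on $\D$. The entire argument reduces to showing that $\psi \log \phi \in H^1$ under the given hypotheses; once this is established, (i) follows by taking real parts of the Poisson integral representation of an $H^1$ function, and (ii) follows from a short additional estimate controlling $|f|$ from above.

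The crux is to establish $\log \phi \in H^1$ under the hypothesis $\arg \phi \in L^1(\T)$, which covers both parts since $L^\infty \subset L^1$. Since $\phi$ is outer, the Schwarz representation \eqref{outerfunctions88u} makes $\log \phi$ an analytic function on $\D$ whose nontangential boundary values equal $\log|\phi(\xi)| + i \arg \phi(\xi)$ almost everywhere on $\T$. Using $\log|\phi| \in L^1$ (automatic for outer functions) and $\arg \phi \in L^1$ (the hypothesis), these boundary values lie in $L^1(\T)$. Combined with the fact that $\log \phi$ lies in the Smirnov class $N^+$, Smirnov's theorem yields $\log \phi \in H^1$, and since $\psi \in H^\infty$ we conclude $\psi \log \phi \in H^1$. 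Taking real parts of the Poisson representation of this $H^1$ function gives
\[
\log|f(z)| = \int_\T P_z(\xi)\, \Re\bigl(\psi(\xi) \log \phi(\xi)\bigr)\, dm(\xi), \qquad z \in \D,
\]
where $P_z$ is the Poisson kernel at $z$. Since $f$ is nonvanishing and lies in $N^+$ (being the exponential of an $H^1$ function), and since $\log|f|$ equals the Poisson integral of its boundary trace, $f$ is outer by the standard characterization, proving (i).

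For (ii), set $V := \Im \log \phi$; the harmonic function $V$ has boundary values $\arg \phi \in L^\infty$, and because $\log \phi \in H^1$ from the previous step, $V$ equals the Poisson integral of $\arg \phi$, so $|V(z)| \leq \|\arg \phi\|_\infty$ for every $z \in \D$. Decomposing
\[
\log|f(z)| = \Re \psi(z) \cdot \log|\phi(z)| - \Im \psi(z) \cdot V(z),
\]
the hypothesis $\Re \psi > 0$ together with the (implicit) boundedness of $\phi$ in the intended application gives $\Re \psi(z) \log|\phi(z)| \leq \|\psi\|_\infty \,(\log \|\phi\|_\infty)^+$, while $|\Im \psi(z) V(z)| \leq \|\psi\|_\infty \|\arg \phi\|_\infty$; hence $\log|f|$ is bounded above and $f$ is bounded. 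The main obstacle will be the first step: verifying $\log \phi \in N^+$ and then invoking Smirnov's theorem to upgrade mere $L^1$ integrability of $\arg \phi$ on the boundary into $\log \phi \in H^1$. Once this is in hand, everything else — the Poisson representation, outerness via the standard characterization, and the uniform bound in (ii) — follows routinely.
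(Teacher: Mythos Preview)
Your proposal is correct and follows essentially the same route as the paper: show $\log\phi\in N^{+}$ with $L^1$ boundary values, invoke Smirnov to get $\log\phi\in H^1$, multiply by $\psi\in H^\infty$, and conclude that $e^{\psi\log\phi}$ is outer. The only cosmetic differences are that the paper isolates the implication ``$g\in H^1 \Rightarrow e^g$ outer'' as a separate lemma (proved via a Herglotz decomposition rather than your direct Poisson-integral argument) and carries out the boundedness estimate in (ii) on $\T$ rather than on $\D$; both versions implicitly use $\phi\in H^\infty$ for part (ii), as you correctly flag.
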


The proof of this proposition needs a few preliminaries. If $u \in L^1(\T)$ and $u \geq 0$, the {\em Herglotz integral }
\begin{equation}\label{bbherggg}
H_{u}(z) = \int_{\T} \frac{\xi + z}{\xi - z} u(\xi) dm(\xi)
\end{equation}
 is analytic on $\D$ and 
$$\Re H_{u}(z) = \int_{\T} \frac{1 - |z|^2}{|\xi - z|^2} u(\xi) dm(\xi) > 0, \quad z \in \D.$$
By a known result \cite[p.~65]{Garnett}, $H_{u}$ is outer. Recall from \S \ref{notataions8} the Hardy space $H^1$ and the Smirnov class $N^{+}$. 

\begin{Lemma}
For $f \in H^1$ there are $G_j \in N^{+}$ with $\Re G_j \geq 0$ on $\D$ for $j = 1, 2$ such that $f = G_1 - G_2$. 
\end{Lemma}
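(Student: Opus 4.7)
The plan is to use the Herglotz representation \eqref{bbherggg} to build $G_1$ and $G_2$ directly from the boundary real part of $f$. Since $f \in H^1$, its boundary function lies in $L^1(\T)$, so $u := \Re f$ on $\T$ is a real-valued $L^1$ function. Writing $u = u^+ - u^-$ for the standard positive/negative decomposition, I form the two Herglotz integrals
$$H_\pm(z) = \int_\T \frac{\xi + z}{\xi - z} u^\pm(\xi)\,dm(\xi), \quad z \in \D.$$
Each $H_\pm$ has $\Re H_\pm = P[u^\pm] \geq 0$ on $\D$, and by the cited Garnett result mentioned immediately after \eqref{bbherggg}, $H_\pm$ is outer; in particular it lies in $N^{+}$.

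The next step is to compare $H_+ - H_-$ with $f$. The standard Poisson representation of $H^1$ functions gives $\Re f(z) = P[u](z) = \Re(H_+(z) - H_-(z))$ for all $z \in \D$. Therefore $H_+ - H_- - f$ is an analytic function on $\D$ with identically zero real part, hence a purely imaginary constant. Evaluating at $z = 0$, and using $H_\pm(0) = \int u^\pm\,dm$ together with $f(0) = \int f\,dm$, this constant comes out to $-i\,\Im f(0)$, so
$$f = H_+ - H_- + i\,\Im f(0) = H_+ \;-\; \bigl(H_- - i\,\Im f(0)\bigr).$$

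Setting $G_1 := H_+$ and $G_2 := H_- - i\,\Im f(0)$ then finishes the proof: shifting by a purely imaginary constant preserves the real part, so $\Re G_j \geq 0$, while $N^{+}$ is closed under addition and contains the constants (directly from the definition $N^{+} = \{f/g : f \in H^\infty,\ g \in H^\infty \cap \mathcal{O}\}$), so $G_2 \in N^{+}$ as well. The degenerate case $u^{+} \equiv 0$ or $u^{-} \equiv 0$ causes no difficulty: the corresponding $H_\pm$ reduces to $0$. The only mildly delicate point is verifying $H_\pm \in N^{+}$, which follows either from the outer-function observation above or, alternatively, from the classical fact that any analytic function on $\D$ with nonnegative real part lies in $H^p$ for every $p<1$ and hence in $N^{+}$.
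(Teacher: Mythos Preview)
Your proof is correct and follows essentially the same route as the paper: decompose the boundary real part of $f$ as $u^{+}-u^{-}$, form the two Herglotz integrals, and use that $f$ and $H_{+}-H_{-}$ differ by a purely imaginary constant. The paper stops at $f=H_{u_{+}}-H_{u_{-}}+ic$ without spelling out how the constant is absorbed, whereas you make this explicit by folding $ic$ into $G_{2}$ and noting that $N^{+}$ contains constants and that the real part is unaffected; you also handle the degenerate case $u^{\pm}\equiv 0$ explicitly.
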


\begin{proof}
Functions in $H^1$ have radial boundary values almost everywhere on $\T$ and so let $u_{+}$ and $u_{-}$ be defined for almost every $\xi \in \T$ by 
$$u_{+} (\xi)= \max(\Re f(\xi), 0), \quad u_{-}(\xi) =  \max(-\Re f(\xi), 0).$$
Since $|\Re f(\xi)| \leq |f(\xi)|$ and $|f|$ is integrable on $\T$, we see that $u_{+}, u_{-}$ are nonnegative integrable functions. Furthermore, by the discussion above, $H_{u_{+}}$ and $H_{u_{-}}$ belong to $N^{+}$ and have positive real parts on $\D$. Finally,   
$H_{u_{+}} - H_{u_{-}}$ belongs to  $N^{+}$ and has the same real part as $f$ on $\T$. Thus, by the uniqueness of the harmonic conjugate, 
$f = H_{u_{+}} - H_{u_{-}} + i c$ for some $c \in \R$. This completes the proof. 
\end{proof}

\begin{Lemma}\label{h1outern8}
If $f \in H^1$, then $e^f$ is  outer. 
\end{Lemma}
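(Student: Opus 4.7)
The idea is simply to exponentiate the $H^1$ decomposition from the preceding lemma. By that lemma (more precisely, by its proof) we may write
$$f \;=\; H_{u_+} \,-\, H_{u_-} \,+\, ic,$$
where $u_+(\xi) = \max(\Re f(\xi),0)$ and $u_-(\xi) = \max(-\Re f(\xi),0)$ are nonnegative $L^1$ functions on $\T$, $H_{u_\pm}$ are their Herglotz integrals as in \eqref{bbherggg}, and $c \in \R$. Exponentiating yields
$$e^{f(z)} \;=\; e^{ic}\,\exp\!\big(H_{u_+}(z)\big)\,\exp\!\big(-H_{u_-}(z)\big).$$

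The first step is to recognize that each of the two non-constant factors is already of the outer-function form \eqref{outerfunctions88u}. Unwinding \eqref{bbherggg} gives
$$\exp\!\big(H_{u_+}(z)\big) \;=\; \exp\!\left(\int_\T \frac{\xi+z}{\xi-z}\,u_+(\xi)\,dm(\xi)\right),$$
which matches \eqref{outerfunctions88u} with integrable real weight $W = u_+$; the identical computation with $W = -u_- \in L^1(\T)$ handles $\exp(-H_{u_-})$. The second step is to merge the two exponentials into one, obtaining
$$\exp\!\left(\int_\T \frac{\xi+z}{\xi-z}\,\big(u_+(\xi)-u_-(\xi)\big)\,dm(\xi)\right),$$
which is outer with integrable real weight $W = u_+ - u_- = \Re f|_{\T}$ (it lies in $L^1(\T)$ since $|\Re f| \leq |f|$ and $f \in H^1$).

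The only delicate point, and the one I expect to be the main obstacle, is bookkeeping for the leading unimodular scalar $e^{ic}$: under the narrow definition \eqref{outerfunctions88u}, outer functions automatically have $\phi(0) > 0$, whereas $e^{f(0)}$ generally has arbitrary argument. One handles this by adopting the standard convention that the outer class is closed under unimodular multiples (such multiples introduce no inner factor in the Smirnov factorization), or equivalently by checking directly that $\log|e^{f}(z)| = \Re f(z)$ is the Poisson integral of its own boundary values $\Re f|_{\T} \in L^1(\T)$, which is the defining property of an outer function. No other step is substantive once the preceding decomposition lemma is in hand.
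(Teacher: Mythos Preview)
Your proof is correct and follows essentially the same route as the paper: decompose $f = H_{u_+} - H_{u_-} + ic$ via the preceding lemma, exponentiate, and recognize each non-constant factor as outer directly from the definition \eqref{outerfunctions88u}. The paper writes the result as the quotient $e^{ic}\,e^{-H_{u_-}}/e^{-H_{u_+}}$ rather than your product $e^{ic}\,e^{H_{u_+}}e^{-H_{u_-}}$, but this is cosmetic; you are in fact more careful than the paper in flagging and resolving the unimodular constant $e^{ic}$.
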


\begin{proof}
By the previous lemma, 
$f = H_{u_{+}} - H_{u_{-}} + i c$ and so 
$$e^f = e^{i c} \frac{e^{-H_{u_{-}}}}{e^{-H_{u_{+}}}}.$$
From the formula for the Herglotz integral in \eqref{bbherggg} and the definition of outer from  \eqref{outerfunctions88u}, the functions $e^{-H_{u_{+}}}$ and $e^{-H_{u_{-}}}$ are outer. Thus, $e^f$ is also outer. 
\end{proof}

\begin{proof}[Proof of Proposition \ref{L:ftog}]
On $\T$ we have 
\begin{align*}
|\log \varphi| &\leq \big|\log|\varphi|\big| + |\arg \varphi| \\
&= \big|\log(|\varphi|/\|\varphi\|_\infty)+\log\|\varphi\|_\infty\big| + |\arg \varphi| \\
&\leq  \big|\log(|\varphi|/\|\varphi\|_\infty)\big|+\big|\log\|\varphi\|_\infty\big| + |\arg \varphi| \\
&=-\log(|\varphi|/\|\varphi\|_\infty)+\big|\log\|\varphi\|_\infty\big| + |\arg \varphi| \\
&\leq -\log|\varphi| + 2\log^+\|\varphi\|_\infty+ |\arg \phi|.
\end{align*}
From $\log |\phi| \in L^1(\T)$ and  $|\arg \phi | \in L^{1}(\T)$, follows $|\log \phi| \in L^1(\T)$. Since $\phi$ is outer, $\log \phi \in N^{+}$. A standard result \cite[p.~28]{Duren} of Smirnov implies $\log \phi \in H^{1}$. Therefore, $\psi \log \varphi \in H^1$. By the previous lemma, $f= \exp(\psi \log \varphi)$ is outer. This proves (i). 

If we assume that 
\[
|\Im \log \varphi| = |\arg \varphi| \leq M \quad \mbox{and} \quad \Re \psi \geq 0
\]
on $\T$,
we have 
\begin{align*}
|f| &= \exp( \Re \psi \, \log|\varphi| - \Im \psi \, \arg \varphi)\\
&\leq  \exp( \|\psi\|_\infty \log(1+\|\varphi\|_\infty) + M\|\psi\|_\infty).
\end{align*}
Thus, $f$ is a bounded outer function.  Note that  
$$\Re \psi \, \log|\varphi| \leq \|\psi\|_\infty \log(1+\|\varphi\|_\infty)$$ follows from the fact that $\Re \psi \geq 0$ on $\T$. This proves (ii).
\end{proof}

Let us use the results above to refine Theorem \ref{growthragfetd6dx6}. 

\begin{Theorem}\label{MTThree}
Suppose $h$ is a positive, decreasing, integrable function on $[0, 1]$.
Let $\{\lambda_n\} \subset (0, 1)$ be interpolating  and 
$\{w_n\} \subset \C \setminus \{0\}$ be bounded with 
$$-(1 - \lambda_n) \log |w_n| \asymp \int_{0}^{1 - \lambda_n} h(t)dt, \quad n \geq 1.$$
\begin{enumerate}
\item[(i)] If $h(|t|) \log^{+} h(|t|) \in L^{1}[-\pi, \pi]$ then there is an outer  $\phi$ such that 
$\phi(\lambda_n) = w_n$ for all $n$. 
\item[(ii)] If 
$$\operatorname{PV} \int_{-\pi}^{\pi} \cot\Big(\frac{\theta - t}{2}\Big) h(|t|) \frac{dt}{2 \pi}$$ is bounded on $[-\pi, \pi]$
then there is a bounded outer $\phi$ such that $\phi(\lambda_n) = w_n$ for all $n$.
\end{enumerate}
\end{Theorem}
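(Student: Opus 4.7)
The plan is to combine two tools already at our disposal: the outer function $\phi_0$ from Theorem \ref{growthragfetd6dx6} whose boundary modulus is prescribed by $h$, and the exponentiation device $\phi^\psi$ from Proposition \ref{L:ftog}, which allows a bounded outer $\psi$ with $\Re\psi>0$ to twist the values at the nodes $\lambda_n$ to land on any prescribed targets that differ from $\phi_0(\lambda_n)$ only by a bounded real exponent.

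The first step is to normalize. The hypothesis $-(1-\lambda_n)\log|w_n|\asymp\int_0^{1-\lambda_n}h(t)\,dt$ together with $h>0$ forces $|w_n|<1$, so by Proposition \ref{comp886bb} I may assume $w_n\in(0,1)$. Let $\phi_0$ be the outer function with $|\phi_0(e^{it})|=e^{-h(|t|)}$. Since the weight $W(t)=-h(|t|)$ is real and symmetric about the real axis, $\phi_0$ is real and lies in $(0,1)$ on $(-1,1)$, and Theorem \ref{growthragfetd6dx6} gives $-(1-\lambda_n)\log\phi_0(\lambda_n)\asymp\int_0^{1-\lambda_n}h(t)\,dt$. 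Consequently the positive real numbers
$$d_n:=\frac{\log w_n}{\log\phi_0(\lambda_n)}$$
are confined to a fixed compact subinterval of $(0,\infty)$. By Theorem \ref{IT001} together with Remark \ref{detailreg0}, there exists a bounded outer $\psi$ with $\Re\psi>0$ and $\psi(\lambda_n)=d_n$; setting $f:=\phi_0^\psi=\exp(\psi\log\phi_0)$, with the branch of the logarithm for which $\log\phi_0\in N^+$, one obtains $f(\lambda_n)=\phi_0(\lambda_n)^{d_n}=w_n$.

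The remaining task is to establish that $f$ is outer (part (i)) or bounded outer (part (ii)), and for this I appeal to Proposition \ref{L:ftog}. Since $\phi_0$ is outer, its boundary argument is, up to a sign, the harmonic conjugate of $-h(|\cdot|)$; explicitly,
$$\arg\phi_0(e^{i\theta})=-\operatorname{PV}\int_{-\pi}^{\pi}\cot\!\left(\frac{\theta-t}{2}\right)h(|t|)\,\frac{dt}{2\pi}.$$
The hypothesis of (ii) is precisely that this principal value is bounded in $\theta$, i.e.\ $\arg\phi_0\in L^\infty(\T)$, and since $\Re\psi>0$, Proposition \ref{L:ftog}(ii) delivers a bounded outer $f$. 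For (i), Zygmund's classical $L\log L$ theorem converts the hypothesis $h(|t|)\log^+h(|t|)\in L^1[-\pi,\pi]$ into the integrability of the conjugate function of $h(|\cdot|)$, and hence into $\arg\phi_0\in L^1(\T)$; Proposition \ref{L:ftog}(i) then gives $f$ outer.

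The main technical step is the identification of $\arg\phi_0$ with (up to sign) the conjugate function of $h(|\cdot|)$ and, for part (i), the invocation of Zygmund's theorem. The remainder of the argument is bookkeeping around the exponentiation trick laid out in the paragraph preceding Proposition \ref{L:ftog}, together with the check that the exponents $d_n$ are bounded above and away from zero, which is immediate from the hypothesis combined with Theorem \ref{growthragfetd6dx6}.
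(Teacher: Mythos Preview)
Your proof is correct and follows essentially the same route as the paper: build the outer $\phi_0$ with $|\phi_0(e^{it})|=e^{-h(|t|)}$, use the $\asymp$ estimate from Theorem \ref{growthragfetd6dx6} to trap the exponents $d_n=\log w_n/\log\phi_0(\lambda_n)$ in a compact positive interval, interpolate the $d_n$ by a bounded outer $\psi$ with $\Re\psi>0$, and then apply Proposition \ref{L:ftog} after identifying $\arg\phi_0$ with the conjugate function of $-h(|\cdot|)$ and invoking Zygmund's $L\log L$ theorem for part (i). Your normalization to $w_n\in(0,1)$ via Proposition \ref{comp886bb} and your observation that $\phi_0$ is real on $(-1,1)$ are minor additions that make explicit what the paper leaves implicit in the discussion preceding Proposition \ref{L:ftog}.
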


\begin{proof}
From the discussion at the very beginning of this section, we can find bounded outer $\phi$ and $\psi$ such that $f = \phi^{\psi}$ satisfies $f(\lambda_n) = w_n$ for all $n$. We just need to check that $f$ is outer (bounded outer). 

By the proof of Theorem \ref{growthragfetd6dx6} and \eqref{67ygvvFGVFTYGFTYUHJ}, $\log |\phi(e^{i t})| = -h(|t|)$ and 
\begin{align*}
\phi(z) & = \exp\Big(\int_{\T} \frac{\xi + z}{\xi - z} \log|\phi(\xi)| dm\Big)\\
& = \exp\Big(\int_{\T} \Re\big( \frac{\xi + z}{\xi - z} \big) \log|\phi(\xi)|dm + i \int_{\T} \Im\big( \frac{\xi + z}{\xi - z} \big) \log|\phi(\xi)|dm\Big).
\end{align*}
From
$$\arg \phi(z) = i \int_{\T} \Im\big( \frac{\xi + z}{\xi - z} \big) \log|\phi(\xi)|dm, \quad z \in \D,$$ and standard theory involving the Hilbert transform on the circle we have 
$$\arg \phi(e^{i \theta}) = - \operatorname{PV} \int_{-\pi}^{\pi} \cot\Big(\frac{\theta - t}{2}\Big) h(|t|) \frac{dt}{2 \pi}.$$
A classical result of Zygmund \cite[Vol I, p. 254]{MR1963498} says that if the function  $h(|t|) \log^{+} h(|t|)$ belongs to $ L^{1}[-\pi, \pi]$ then $\arg \phi \in L^{1}(\T)$. An application of Proposition \ref{L:ftog} yields $f = \phi^{\psi}$ is outer. 

If the above Hilbert transform is bounded, another application of Proposition \ref{L:ftog}, along with the fact that we can always choose $\psi$ so that $\Re \psi > 0$ (Remark \ref{detailreg0}),  yields $f = \phi^{\psi}$ is bounded and outer. 
\end{proof}

\begin{Example}
If $\{\lambda_n\} \subset (0, 1)$ is interpolating, we know from Theorem \ref{T:interpol-outer-inner} that any $\phi \in N^{+}$ with
$$\phi(\lambda_n) = \exp\Big(-\frac{2}{1 - \lambda_n}\Big), \quad n \geq 1$$ must have an inner factor. In fact, the obvious guess at an analytic functions that interpolates this sequence is 
$$\phi(z) = \exp\Big(-\frac{2}{1 - z}\Big)$$ turns out to be a constant multiple of an inner function. Indeed, the singular inner function 
$$\exp\Big(-\frac{1 + z}{1 - z}\Big)$$ can be written as 
\begin{align*}
\exp\Big(-\frac{1 + z}{1 - z}\Big) & = \exp\Big(- \frac{2 - (1 - z)}{1 - z}\Big)\\
& = \exp\Big(-\frac{2}{1 - z}\Big) e.
\end{align*}
Thus $\phi$ is a constant multiple of a singular inner function.
\end{Example} 

\begin{Example}
Let 
$$w_{n} = \exp\Big(-\frac{1}{1 - \lambda_n} \frac{1}{(\log\frac{100}{1 - \lambda_n})^2}\Big).$$
Here 
$$h(t)  = \frac{2}{t  (\log(\frac{100}{t}))^3}, \quad 0 < t < 1,$$
is  $h$ is positive and decreasing  on $[0, 1]$ and $h(|t|) \log^{+} h(|t|)$ belongs to $L^{1}[-1, 1]$. Thus $\{w_n\}$ can be interpolated with an outer function. 
\end{Example}

\begin{Example}
Let
$$w_{n} = \exp\Big(-\frac{1}{(1 - \lambda_n)^{\alpha}}\Big),$$
where $0 <\alpha < 1$. In this case,
$$h(t) =  \frac{1 - \alpha}{t^{\alpha}}$$ is positive, decreasing, and  $h(|t|) \log^{+} h(|t|) \in L^1[-\pi, \pi]$. Thus,  by the previous theorem, $\{w_n\}$ can be interpolated by an outer function. In fact, one can take $\phi$ to be a bounded outer function. To see this, observe that $(1 - z)^{-\alpha} \in H^1$ and so 
$$\phi(z) = \exp\Big(-\frac{1}{(1 - z)^{\alpha}}\Big)$$ is outer (Lemma \ref{h1outern8}). Furthermore, 
\begin{align*}
\frac{1}{1 - e^{i \theta}} &=  \frac{e^{i \theta/2}}{e^{- i \theta/2} - e^{i \theta/2}}\\
& = \frac{e^{- i \theta/2}}{-2 i \sin(\theta/2)}\\
& = \frac{1}{2 \sin(\theta/2)} e^{i \frac{\pi - \theta}{2}}.
\end{align*}
Thus, 
$$|\phi(e^{i \theta})| \leq e^{-2^{-\alpha} \cos(\pi \alpha/2)}, \quad \theta \in [-\pi, \pi],$$
and so $\phi$ is outer and $\phi$ is bounded on $\T$. A result of Smirnov \cite[p.~28]{Duren} says that $\phi \in H^{\infty}$. 
If $0 < m \leq d_n \leq M  < \infty$,  one can also interpolate 
$$w_n = \exp\Big(- d_n\frac{1}{(1 - \lambda_n)^{\alpha}}\Big)$$ with an outer function. 
\end{Example}

\begin{Example}
If $\{\lambda_n\} \subset (0, 1)$ is interpolating and $\{d_n\}$ satisfies $0 < m \leq d_n \leq M  < \infty$ for all $n \geq 1$, one can appeal to Proposition \ref{L:ftog} directly to interpolate 
$$w_{n} = (1 - \lambda_n)^{d_n}$$ with a bounded outer function. Here $f = \phi^{\psi}$, where $\phi(z) = 1 - z$ (which clearly has bounded argument) and $\psi$ is the bounded outer function with $\Re \psi > 0$ and $\psi(\lambda_n) = d_n$ for all $n \geq 1$. 
\end{Example}

\section{Common range}\label{Crrrsdsdsd546u}

In this section, we present an application of our outer interpolation results. 
For $\phi \in H^{\infty}$ let $T_{\overline{\phi}}$ denote the co-analytic Toeplitz operator on the Hardy space $H^2$. By this we mean the operator $T_{\overline{\phi}}: H^2 \to H^2$ defined by  $T_{\overline{\phi}} f = P_{+}(\overline{\phi} f),$
where $P_{+}$ is the standard orthogonal projection of $L^2(\T)$ onto $H^2$. See \cite[Ch.~4]{MR3526203} for the basics of Toeplitz operators. 
 Let 
$$\mathscr{R}(H^2) := \bigcap \big\{ T_{\overline{\phi} }H^2: \phi \in H^{\infty} \setminus \{0\}\big\}$$
denote the {\em common range} of the (nonzero)  co-analytic Toeplitz operators on $H^2$. A well-known result is the following:

\begin{Theorem}[McCarthy \cite{MR1065054}]\label{098ueiorge32}
$$\mathscr{R}(H^2) = \{f \in H^{\infty}: \widehat{f}(n) = O(e^{-c_f \sqrt{n}})\}.$$
\end{Theorem}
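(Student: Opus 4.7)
The plan is to reduce to outer symbols via Douglas factorization, and then to prove the two inclusions separately, with the harder direction relying on an explicit one-parameter family of extremal outer functions.

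First I would establish the reduction already hinted at in the paper. Factoring any nonzero $\phi \in H^{\infty}$ as $\phi = I_\phi O_\phi$ with $I_\phi$ inner and $O_\phi$ outer, the multiplicativity $T_{\overline a} T_{\overline b} = T_{\overline{ab}}$ for $a, b \in H^{\infty}$ (which holds because $\overline a \cdot P_-(\overline b f)$ has Fourier support only at strictly negative indices) together with the surjectivity of $T_{\overline{I_\phi}}$ on $H^2$ (it has right inverse $T_{I_\phi}$, as $T_{\overline{I_\phi}} T_{I_\phi} = I$) yields $T_{\overline\phi} H^2 = T_{\overline{O_\phi}} H^2$. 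Hence the intersection defining $\mathscr{R}(H^2)$ collapses to $\bigcap\{T_{\overline O} H^2 : O \in H^{\infty} \cap \mathcal{O}\}$.

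For the inclusion $\subset$, I would test a given $f \in \mathscr{R}(H^2)$ against a one-parameter family of bounded outer functions $\phi_a$, $a > 0$, whose boundary moduli $|\phi_a(e^{it})| = \exp(-a v(t))$ concentrate at a single point, for a fixed positive $v \in L^1(\T)$ with a controlled singularity at $t = 0$ (for instance $v(t) \sim |t|^{-\alpha}$ with $0 < \alpha < 1$). A saddle-point analysis of the Poisson integral representation of $\log|\phi_a|$, together with the conjugate Herglotz integral for $\arg \phi_a$, gives Taylor coefficient asymptotics $|\widehat{\phi_a}(n)| \asymp n^{-\beta} e^{-c(a)\sqrt{n}}$ with $c(a) \to \infty$ as $a \to \infty$. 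Writing $f = T_{\overline{\phi_a}} h_a$ with $h_a \in H^2$, the identity $\overline{\phi_a} h_a - f \in \overline{z H^2}$ translated to Fourier coefficients reads $\widehat f(n) = \sum_{k \geq n} \overline{\widehat{\phi_a}(k-n)} \widehat{h_a}(k)$; combined with a uniform control of $\|h_a\|_{H^2}$ from a closed-graph argument applied to the solution map, Cauchy--Schwarz yields $|\widehat f(n)| = O(e^{-c(a)\sqrt{n}})$, and optimizing $a = a(n)$ produces the $O(e^{-c_f \sqrt{n}})$ decay.

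For the inclusion $\supset$, given $f \in H^{\infty}$ with $|\widehat f(n)| \leq M e^{-c\sqrt{n}}$ and outer $O \in H^{\infty}$, I would construct $h \in H^2$ satisfying $T_{\overline O} h = f$ directly: define the Fourier coefficients of $h$ from the formal boundary identity $\overline O h = f + \overline{zk}$, verify $h \in H^2$ by showing the $e^{-c\sqrt{n}}$ decay of $\widehat f$ dominates the growth of $1/O$ in an appropriate weighted $\ell^2$ sense, and then check $P_+(\overline O h) = f$ by a direct Fourier computation. The main obstacle is the inclusion $\subset$: the saddle-point asymptotics for $\widehat{\phi_a}(n)$ are classical, but passing from the qualitative fact that $f$ lies in every $T_{\overline{\phi_a}} H^2$ to a bound on $|\widehat f(n)|$ that is uniform enough in $a$ to permit the optimization $a = a(n)$ is the delicate step. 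I expect the cleanest route is a functional-analytic bound, most naturally a closed-graph argument applied to the linear preimage map $\phi \mapsto h_\phi$ on a suitable Banach space, to convert the abstract range membership into a quantitative inequality.
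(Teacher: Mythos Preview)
The paper does not prove this theorem; it is quoted as a known result of McCarthy \cite{MR1065054} and used as a black box (for instance in Theorem~\ref{notusefule8shyshhH}). So there is no proof in the paper to compare your proposal against.

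That said, your sketch has a genuine gap in the inclusion $\subset$. From the identity
\[
\widehat f(n)=\sum_{j\geq 0}\overline{\widehat{\phi_a}(j)}\,\widehat{h_a}(n+j),
\]
Cauchy--Schwarz gives only
\[
|\widehat f(n)|\leq \Big(\sum_{j\geq 0}|\widehat{\phi_a}(j)|^2\Big)^{1/2}\Big(\sum_{j\geq 0}|\widehat{h_a}(n+j)|^2\Big)^{1/2}\leq \|\phi_a\|_2\,\|h_a\|_2,
\]
which is a bound independent of $n$. The decay $|\widehat{\phi_a}(j)|=O(e^{-c(a)\sqrt{j}})$ is in the summation variable $j$, not in $n$, so it does not by itself force $|\widehat f(n)|=O(e^{-c(a)\sqrt{n}})$. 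Optimizing over $a$ cannot repair this, and the closed-graph argument you invoke would at best control $\|h_a\|_{H^2}$, which again yields no decay in $n$. McCarthy's actual route is different: he identifies $\mathscr{R}(H^2)$ with the dual of the Smirnov class $N^{+}$ under an inductive-limit topology (writing $N^{+}=\bigcup_h H^2(|h|^2)$ over bounded outer $h$), and then characterizes that dual via the coefficient condition. The inclusion $\supset$ in your plan is also too vague as written: ``define the Fourier coefficients of $h$ from the formal boundary identity'' does not explain why the resulting sequence is in $\ell^2$ for an \emph{arbitrary} bounded outer $O$, and the phrase ``dominates the growth of $1/O$ in an appropriate weighted $\ell^2$ sense'' hides exactly the difficulty, since $1/O$ may fail to be in any $H^p$.
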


The above decay on  the Fourier coefficients 
$\widehat{f}(n)$
shows that $\{n^{K} \widehat{f}(n)\}$ is absolutely summable for all $K \geq 0$ and so functions in $\mathscr{R}(H^2)$ must be infinitely differentiable on $\overline{\D}$. 

The Douglas factorization theorem \cite{MR203464} implies that $T_{\overline{\phi}} H^2 = T_{\overline{\phi_0}}H^2 $, where $\phi_{0}$ is the outer part of $\phi \in H^{\infty}$. Thus, 
$$\mathscr{R}(H^2) =  \bigcap\big\{ T_{\overline{\phi} }H^2: \phi \in H^{\infty} \cap \mathcal{O}\big\}.$$
Recall that $\mathcal{O}$ are the outer functions. Also important here is that $T_{\overline{\phi}}$ is injective whenever $\phi \in H^{\infty} \cap \mathcal{O}$.

What does this common range problem look like in model spaces? For an inner function $u$, the {\em model space} 
$\mathcal{K}_u := (u H^2)^{\perp}$ is  an invariant subspace for any co-analytic Toeplitz operator $T_{\overline{\phi}}$, $\phi \in H^{\infty}$. For this and other facts about model spaces used in this section, we refer the reader to \cite{MR3526203}.
For a fixed inner function $u$, what is 
$$\mathscr{R}(\K_u) :=  \bigcap\big\{ T_{\overline{\phi} } \mathcal{K}_u: \phi \in H^{\infty} \cap \mathcal{O}\big\}?$$

Since $T_{\overline{\phi}} \mathcal{K}_u \subset T_{\overline{\phi}} H^2$ we have 
$\mathscr{R}(\mathcal{K}_u) \subset \mathscr{R}(H^2)$ but the inclusion can be strict (see below). Furthermore, $\mathscr{R}(\K_u) \subset \K_u$ since $T_{\overline{\phi}} \K_u \subset \K_u$ for all bounded outer $\phi$.

\begin{Example}
If $u(z) = z^{N}$ then $\mathcal{K}_u = \mathscr{P}_{N - 1}$, the polynomials of degree at most $N - 1$. Since $\phi$ is outer,  $T_{\overline{\phi}}$ is injective and so $T_{\overline{\phi}} \mathscr{P}_{N - 1} = \mathscr{P}_{N - 1}.$ So in this case
$\mathscr{R}(\K_u) = \mathscr{P}_{N - 1}$.
\end{Example}

\begin{Example}\label{77tRRWO}
In a similar way, for a finite Blaschke product $u$ with distinct zeros $\lambda_1, \ldots, \lambda_n$ in $\D$, we have 
$\mathcal{K}_u = \bigvee\{k_{\lambda_1}, \ldots, k_{\lambda_n}\},$
where 
$k_{\lambda}(z) = \frac{1}{1 - \overline{\lambda} z}$ are the Cauchy kernels for $H^2$. It follows, using 
\begin{equation}\label{5533388UgggGT}
T_{\overline{\phi}} k_{\lambda_j} = \overline{\phi(\lambda_j)} k_{\lambda_{j}},
\end{equation} 
 and $\phi(\lambda_j) \not = 0$,  that 
$\mathscr{R}(\mathcal{K}_u) =  \bigvee\{k_{\lambda_1}, \ldots, k_{\lambda_n}\}.$
\end{Example}

What are some inhabitants of $\mathscr{R}(\mathcal{K}_u)$ when $u$ is not finite a Blaschke product? If $\lambda$ is a zero of $u$ then $k_{\lambda} \in \mathcal{K}_u$ and so $k_{\lambda} \in \mathscr{R}(\K_u)$ as argued in Example \ref{77tRRWO}. It is more difficult to identify other elements of $\mathscr{R}(\K_u)$.

\begin{Remark}
Since $\mathscr{R}(\K_u) \subset \K_u$, then $\mathscr{R}(\K_u)$ will inherit the properties of functions in $\K_u$. For example, if 
$$\Big\{\xi \in \T: \varliminf_{z \to \xi} |u(z)| = 0\Big\},$$
the boundary  spectrum of $u$,
 omits an an arc $I$ of $\T$, then function in $\K_u$ will have an analytic continuation across $I$. Hence functions $\mathscr{R}(\K_u)$ will also have this  property. 
\end{Remark}


\begin{Example}\label{nosmooth}
It is possible to produce a suitable singular inner function $u$ for which $\K_u$ contains no nonzero smooth functions \cite{MR2198372}. Since $\mathscr{R}(\mathcal{K}_u)$ is contained in the smooth functions  (Theorem \ref{098ueiorge32}), it follows that $\mathscr{R}(\K_u) = \{0\}$. 
\end{Example}



\begin{Theorem}\label{notusefule8shyshhH}
If $u$ is inner then
$$\mathscr{R}(\K_u) = \K_{u} \cap \mathscr{R}(H^2) = \{f \in \K_u: \widehat{f}(n) = O(e^{-c_f \sqrt{n}})\}.$$
\end{Theorem}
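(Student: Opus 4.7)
The plan is to establish the two equalities separately. The second equality $\K_u \cap \mathscr{R}(H^2) = \{f \in \K_u : \widehat{f}(n) = O(e^{-c_f \sqrt{n}})\}$ is essentially a bookkeeping matter: by McCarthy's Theorem~\ref{098ueiorge32}, intersecting $\mathscr{R}(H^2)$ with $\K_u$ gives the stated set, and membership in $H^\infty$ is automatic once $f \in \K_u \subset H^2$ has the claimed Fourier decay (which is trivially absolutely summable).

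For the first equality, the easy inclusion $\mathscr{R}(\K_u) \subset \K_u \cap \mathscr{R}(H^2)$ follows from the invariance $T_{\overline{\phi}} \K_u \subset \K_u$ together with the obvious $T_{\overline{\phi}} \K_u \subset T_{\overline{\phi}} H^2$, upon taking the intersection over all outer $\phi \in H^\infty \cap \mathcal{O}$.

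The substantive direction is $\K_u \cap \mathscr{R}(H^2) \subset \mathscr{R}(\K_u)$. Fix $f \in \K_u \cap \mathscr{R}(H^2)$ and an outer $\phi \in H^\infty \cap \mathcal{O}$. By definition there is some $h \in H^2$ with $T_{\overline{\phi}} h = f$; the crux is that such an $h$ is forced to lie in $\K_u$. I would invoke three standard facts about co-analytic Toeplitz operators on $H^2$: (a) they multiply when their conjugates are in $H^\infty$, so in particular $T_{\overline{u}} T_{\overline{\phi}} = T_{\overline{u\phi}} = T_{\overline{\phi}} T_{\overline{u}}$; (b) $\ker T_{\overline{u}} = \K_u$ for any inner $u$; (c) if $\phi \in H^\infty$ is outer, then $T_{\overline{\phi}}$ is injective on $H^2$ (its kernel coincides with the model space of the inner part of $\phi$, which is trivial here). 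Applying $T_{\overline{u}}$ to both sides of $T_{\overline{\phi}} h = f$ and using (a) together with $T_{\overline{u}} f = 0$ (since $f \in \K_u$) gives
\[
T_{\overline{\phi}}(T_{\overline{u}} h) = T_{\overline{u}}(T_{\overline{\phi}} h) = T_{\overline{u}} f = 0.
\]
Then (c) forces $T_{\overline{u}} h = 0$, and (b) yields $h \in \K_u$. Hence $f = T_{\overline{\phi}} h \in T_{\overline{\phi}} \K_u$, and since $\phi \in H^\infty \cap \mathcal{O}$ was arbitrary, $f \in \mathscr{R}(\K_u)$.

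There is essentially no analytic obstacle: once the standard Toeplitz identities are on the table, the proof is a one-line commutator computation. Notably, none of the outer-interpolation machinery built up earlier in the paper is needed for this theorem; that machinery enters only for the more concrete description of $\mathscr{R}(\K_u)$ when $u$ is an interpolating Blaschke product in Theorem~\ref{Hdecereuiusvnn77}.
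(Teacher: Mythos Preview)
Your proof is correct. The overall architecture matches the paper's: the second equality is McCarthy's theorem restricted to $\K_u$, the inclusion $\mathscr{R}(\K_u)\subset\K_u\cap\mathscr{R}(H^2)$ is immediate, and the work lies in showing that the $H^2$ preimage $h$ of $f$ under $T_{\overline{\phi}}$ must already belong to $\K_u$.

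Where you and the paper diverge is in how that last point is argued. The paper takes the adjoint route: from $\langle T_{\overline{\phi}}g_\phi,uh\rangle=0$ it passes to $\langle g_\phi,\phi uh\rangle=0$ for all $h\in H^2$, and then invokes Beurling's theorem (density of $\phi H^2$ in $H^2$ for outer $\phi$) to conclude $g_\phi\perp uH^2$. You instead use the operator-algebra route: commute $T_{\overline{u}}$ past $T_{\overline{\phi}}$ and kill $T_{\overline{u}}h$ by injectivity of $T_{\overline{\phi}}$. These are two faces of the same coin, since injectivity of $T_{\overline{\phi}}$ is exactly the dual statement to density of $\phi H^2$; your formulation is arguably a touch cleaner because it stays entirely in the language of co-analytic Toeplitz operators and avoids unpacking inner products. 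Your closing remark that the outer-interpolation results play no role here is also accurate.
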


\begin{proof}
The containment $\subset$ is automatic. Now suppose $f \in \K_u \cap \mathscr{R}(H^2)$. Given any $\phi \in H^{\infty} \cap \mathcal{O}$ there is a $g_{\phi} \in H^2$ for which $f = T_{\overline{\phi}} g_{\phi}$. Since $f \in \K_u$ we have 
$\langle f, u h\rangle = 0$ for all $h \in H^2$. Using the fact that $T_{\overline{\phi}}^{*} = T_{\phi}$ (which is just multiplication by $\phi$), this implies
$$0 = \langle f, u h\rangle = \langle T_{\overline{\phi}} g_{\phi},  u h\rangle = \langle  g_{\phi},  \phi u h\rangle, \quad h \in H^2.$$
The function $\phi$ is outer and so $\{\phi h: h \in H^2\}$ is dense in $H^2$ (Beurling's theorem \cite[p.~114]{Duren}). Thus $\langle g_{\phi}, u k\rangle = 0$ for all $k \in H^2$ and so $g_{\phi} \in \K_u$. Thus, $f \in \mathscr{R}(\K_u)$. 
\end{proof}

Though Theorem \ref{notusefule8shyshhH} is a description of $\mathscr{R}(\K_u)$, it can be difficult to apply. Indeed, the precise contents of a model space are not always well understood and thus determining which of them have the right smoothness property can be quite challenging. In the next section we focus on a special class of inner functions $u$ where we better understand $\K_u$ as well as  $\mathscr{R}(\K_u)$. 

\section{Interpolating Blaschke products}

In Example \ref{77tRRWO} we computed $\mathscr{R}(\mathcal{K}_{B})$ when $B$ is a finite Blaschke product. In this section we extend our discussion to interpolating Blaschke products.  Let 
$$\kappa_{\lambda} : = \frac{k_{\lambda}}{\|k_{\lambda}\|} = \frac{\sqrt{1 - |\lambda|^2}}{1 - \overline{\lambda} z}, \quad \lambda \in \D,$$
denote the normalized Cauchy kernel for $H^2$. This next proposition is a well-known fact about model spaces \cite[p.~277]{MR3526203}.

\begin{Proposition}\label{tyuhghdfsgh}
 If $B$ is an interpolating Blaschke product with zeros $\{\lambda_{n}\}$, then $\{\kappa_{\lambda_n}\}$ is a Riesz basis for $\mathcal{K}_B$. Hence each $f \in \mathcal{K}_B$ has a unique representation as 
$
f = \sum_{n \geq 1} a_n \kappa_{\lambda_n},
$
where $\{a_n\} \in \ell^2$, that is, $\sum_{n \geq 1} |a_n|^2 < \infty$. Conversely, any such linear combination belongs to $\mathcal{K}_B$.
\end{Proposition}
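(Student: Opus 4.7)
The plan is to derive the Riesz basis property from the $H^2$-interpolation characterization of interpolating sequences, then verify that the closed linear span of $\{\kappa_{\lambda_n}\}$ exhausts $\mathcal{K}_B$.

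First I would recall that $\langle f, \kappa_{\lambda_n}\rangle = \sqrt{1-|\lambda_n|^2}\, f(\lambda_n)$ for every $f \in H^2$ and consider the sampling operator $T : H^2 \to \ell^2$, $Tf = \{\langle f, \kappa_{\lambda_n}\rangle\}_{n \geq 1}$. By the Shapiro--Shields extension of Carleson's theorem to $H^2$, the sequence $\{\lambda_n\}$ being interpolating is equivalent to $T$ being bounded and surjective onto $\ell^2$. Taking adjoints, $T^* : \ell^2 \to H^2$ acts by $T^*\{a_n\} = \sum_{n \geq 1} a_n \kappa_{\lambda_n}$, and the surjectivity of $T$ translates into $T^*$ being bounded below on $\ell^2$. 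Combined with the boundedness of $T^*$ itself, this yields two-sided estimates
\begin{equation*}
c \, \|\{a_n\}\|_{\ell^2} \leq \Bigl\| \sum_{n \geq 1} a_n \kappa_{\lambda_n} \Bigr\|_{H^2} \leq C \, \|\{a_n\}\|_{\ell^2}
\end{equation*}
for some $0 < c \leq C < \infty$, which is precisely the Riesz sequence condition. In particular the series converges in $H^2$ iff $\{a_n\} \in \ell^2$, and each $f$ in the closed linear span has a unique such representation.

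Next I would check that each $\kappa_{\lambda_n}$ lies in $\mathcal{K}_B$: for any $h \in H^2$, the reproducing property gives $\langle Bh, \kappa_{\lambda_n}\rangle = \sqrt{1 - |\lambda_n|^2}\, B(\lambda_n) h(\lambda_n) = 0$ since $B(\lambda_n) = 0$. Finally, to see that $\overline{\operatorname{span}}\{\kappa_{\lambda_n}\} = \mathcal{K}_B$, suppose $f \in \mathcal{K}_B$ is orthogonal to every $\kappa_{\lambda_n}$. Then $f(\lambda_n) = 0$ for all $n$, so $f$ vanishes on the zero set of $B$ with (at least) the correct multiplicities, hence $f/B \in H^2$, i.e., $f \in B H^2 \cap \mathcal{K}_B = \{0\}$. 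Combining the Riesz sequence estimate with the density, $\{\kappa_{\lambda_n}\}$ is a Riesz basis for $\mathcal{K}_B$, which is exactly the claim.

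The only step requiring care, and the one I would treat as the main obstacle, is the equivalence between the interpolating sequence condition \eqref{deltaaaa} and the surjectivity of $T$ onto $\ell^2$ (as opposed to the $H^\infty$-to-$\ell^\infty$ statement given in the introduction). I would either invoke Shapiro--Shields directly or, if a self-contained argument is desired, deduce it from the $H^\infty$ version by the standard trick of multiplying an $H^\infty$ interpolant by a suitable function in $H^2$; the quantitative bound built into Theorem \ref{T:interpol} then translates into the lower bound for $T^*$.
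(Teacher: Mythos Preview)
Your proof is correct and follows the standard route (sampling operator plus Shapiro--Shields, then completeness via the factorization $f = Bg$). Note, however, that the paper does not actually prove this proposition: it is stated as a well-known fact about model spaces with a citation to \cite[p.~277]{MR3526203}, so there is no ``paper's own proof'' to compare against. Your argument is precisely the kind of proof one finds in that reference.
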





We now obtain a more tangible description of $\mathscr{R}(\K_B)$ than the one in Theorem \ref{notusefule8shyshhH}. We start with the following lemma. 

\begin{Lemma} \label{L:separated-seq-2}
Let $\varphi$ be bounded and outer  and  $B$ be an interpolating Blaschke product with zeros $\{\lambda_n\}$. Then 
\begin{equation}\label{LLKK8898ui342}
T_{\overline{\phi}} \mathcal{K}_{B} = \left\{ \sum_{n = 1}^{\infty} b_n \kappa_{\lambda_n} : \sum_{n = 1}^{\infty} \frac{|b_n|^2}{|\varphi(\lambda_n)|^2} < \infty \right\}.
\end{equation}
\end{Lemma}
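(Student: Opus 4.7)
The plan is to combine two facts: the Riesz basis representation $f = \sum_n a_n \kappa_{\lambda_n}$ on $\mathcal{K}_B$ supplied by Proposition \ref{tyuhghdfsgh}, and the observation that each normalized kernel $\kappa_{\lambda_n}$ is an eigenvector for $T_{\overline{\phi}}$ with eigenvalue $\overline{\phi(\lambda_n)}$, which comes from \eqref{5533388UgggGT} after dividing through by the scalar $\|k_{\lambda_n}\|$. Because $\phi$ is outer, $\phi(\lambda_n) \ne 0$ for every $n$, so all these eigenvalues are nonzero; this is precisely what will allow the coefficient map $\{a_n\} \leftrightarrow \{b_n\}$ to be inverted.

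For the inclusion $\subset$, I would take $f \in \mathcal{K}_B$, expand it as $f = \sum_n a_n \kappa_{\lambda_n}$ with $\{a_n\} \in \ell^2$, and use boundedness of $T_{\overline{\phi}}$ on $H^2$ to interchange the operator with the sum, obtaining
\[
T_{\overline{\phi}} f = \sum_{n=1}^{\infty} a_n \overline{\phi(\lambda_n)} \kappa_{\lambda_n} = \sum_{n=1}^{\infty} b_n \kappa_{\lambda_n}, \quad b_n := a_n \overline{\phi(\lambda_n)}.
\]
Since $\phi$ is bounded, $\{b_n\}$ is itself in $\ell^2$, so this series converges in $\mathcal{K}_B$ (by the Riesz basis property) and the identification is legitimate. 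Solving $a_n = b_n/\overline{\phi(\lambda_n)}$ and squaring gives $\sum_n |b_n|^2/|\phi(\lambda_n)|^2 = \sum_n |a_n|^2 < \infty$.

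For the reverse inclusion $\supset$, I would start with $\{b_n\}$ satisfying $\sum_n |b_n|^2/|\phi(\lambda_n)|^2 < \infty$ and set $a_n := b_n/\overline{\phi(\lambda_n)}$; this sequence is in $\ell^2$, so by Proposition \ref{tyuhghdfsgh} the function $g := \sum_n a_n \kappa_{\lambda_n}$ lies in $\mathcal{K}_B$. Applying $T_{\overline{\phi}}$ termwise exactly as in the previous paragraph yields $T_{\overline{\phi}} g = \sum_n b_n \kappa_{\lambda_n}$, which places this target function in $T_{\overline{\phi}} \mathcal{K}_B$.

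The main obstacle is really only conceptual rather than computational: one must recognize that outerness of $\phi$ is exactly what makes the diagonal map $a_n \mapsto \overline{\phi(\lambda_n)} a_n$ a bijection between the $\ell^2$ coefficient sequences of $\mathcal{K}_B$ and the weighted-$\ell^2$ sequences on the right-hand side of \eqref{LLKK8898ui342}, while boundedness of $\phi$ is what keeps the diagonal multiplier bounded so that the sums converge. The interchange of $T_{\overline{\phi}}$ with the infinite series is immediate from continuity, and uniqueness of Riesz basis expansions removes any ambiguity in reading off the coefficients.
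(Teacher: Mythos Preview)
Your proof is correct and follows essentially the same approach as the paper: both directions rest on the Riesz basis expansion from Proposition~\ref{tyuhghdfsgh} together with the eigenvector relation $T_{\overline{\phi}}\kappa_{\lambda_n}=\overline{\phi(\lambda_n)}\,\kappa_{\lambda_n}$, and the coefficient bookkeeping $b_n=a_n\overline{\phi(\lambda_n)}$ is identical. Your write-up is slightly more explicit than the paper's in justifying the interchange of $T_{\overline{\phi}}$ with the series and in noting that outerness forces $\phi(\lambda_n)\neq 0$, but there is no substantive difference.
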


\begin{proof}
Suppose 
$$\sum_{n = 1}^{\infty} \frac{|b_n|^2}{|\phi(\lambda_n)|^2} < \infty.$$ Then 
$$f =\sum_{n = 1}^{\infty} \frac{b_n}{\overline{\phi(\lambda_n)}} \kappa_{\lambda_n} \in \K_B$$ (Proposition \ref{tyuhghdfsgh}) and, by \eqref{5533388UgggGT},  
$$T_{\overline{\phi}} f = \sum_{n = 1}^{\infty} \frac{b_n}{\overline{\phi(\lambda_n)}} \overline{\phi(\lambda_n)} \kappa_{\lambda_n} = \sum_{n = 1}^{\infty} b_n \kappa_{\lambda_n}.$$
Thus, $\sum_{n \geq 1} b_n \kappa_{\lambda_n} \in T_{\overline{\phi}} \K_B.$

Conversely, suppose  $g = T_{\overline{\phi}} f$ for some $f \in \K_B$. Then 
$f = \sum_{n \geq 1} a_n \kappa_{\lambda_n}$ for some unique  $\{a_n\} \in \ell^2$. From Proposition \ref{tyuhghdfsgh} 
$$g = T_{\overline{\phi}} f = \sum_{n = 1}^{\infty} a_{n} \overline{\phi(\lambda_n)} \kappa_{\lambda_n}$$ and so
$\{a_{n} \overline{\phi(\lambda_n)}\} = \{b_n\} \in \ell^2$. Thus, $\{a_{n}\} = \{b_n/\overline{\phi(\lambda_n)}\} \in \ell^2$. 
\end{proof}

Here is our description of $\mathscr{R}(\K_B)$. 

\begin{Theorem}\label{Hdecereuiusvnn77}
Suppose $B$ is an interpolating Blaschke product with zeros $\{\lambda_n\} \subset (0, 1)$. For $\{a_n\} \in \ell^2$ and 
$f = \sum_{n \geq 1} a_n \kappa_{\lambda_n}$, the following are equivalent:
\begin{enumerate}
\item[(i)] $f \in \mathscr{R}(\K_B)$; 
\item[(ii)]  The sum 
$$\sum_{n = 1}^{\infty} \frac{|a_n|^2}{|\phi(\lambda_n)|}$$
is finite for every bounded outer function $\phi$. 
\item[(iii)] The sum
$$\sum_{n = 1}^{\infty} |a_n|^2 \exp\Big(\frac{1}{1 - \lambda_n} \int_{0}^{1 - \lambda_n} h(t)\,dt\Big)$$
is finite 
for every positive, decreasing, integrable function $h$ on $[0, 1]$.
\item[(iv)] $$\sum_{n = 1}^{\infty} a_n \sqrt{1 - |\lambda_n|^2} \lambda_{n}^{N} = O(e^{-c_f \sqrt{N}}), \quad N \to \infty.$$
\end{enumerate}
\end{Theorem}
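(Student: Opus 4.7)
My plan is to prove the four-way equivalence by establishing three links: (i) $\Leftrightarrow$ (ii) via Lemma \ref{L:separated-seq-2}; (ii) $\Leftrightarrow$ (iii) via the growth-rate theorems \ref{MTone} and \ref{growthragfetd6dx6}; and (i) $\Leftrightarrow$ (iv) via Theorem \ref{notusefule8shyshhH} combined with McCarthy's Theorem \ref{098ueiorge32}.

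For (i) $\Leftrightarrow$ (ii), I would apply Lemma \ref{L:separated-seq-2} to rewrite (i) as the statement that $\sum |a_n|^2/|\phi(\lambda_n)|^2 < \infty$ for every bounded outer $\phi$. The squaring map sends bounded outer functions to bounded outer functions, and conversely every bounded outer $\phi$ admits a bounded outer square root $\exp(\tfrac{1}{2}\log \phi)$ (since $\phi$ is zero-free on $\mathbb{D}$ and the outer representation can be halved on its exponent). So the universally quantified condition on $\sum |a_n|^2/|\phi(\lambda_n)|^2$ is equivalent to the corresponding one on $\sum |a_n|^2/|\phi(\lambda_n)|$, which is exactly (ii).

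For (iii) $\Rightarrow$ (ii), given a bounded outer $\phi$ with $M = \|\phi\|_\infty$, Theorem \ref{MTone} produces a positive decreasing integrable $h$ on $[0,1]$ with
\[
\frac{1}{|\phi(\lambda_n)|} \leq \frac{1}{M}\exp\Big(\frac{1}{1-\lambda_n}\int_0^{1-\lambda_n} h(t)\,dt\Big),
\]
so (iii) applied to this $h$ yields (ii). Conversely, for (ii) $\Rightarrow$ (iii), given $h$ I would take the bounded outer function $\tilde\phi$ with boundary modulus $|\tilde\phi(e^{it})| = e^{-2\pi h(|t|)}$, as constructed in the proof of Theorem \ref{growthragfetd6dx6} (rescaling $h$ to $2\pi h$ so as to absorb the universal constant $1/(2\pi)$ appearing in the lower bound of \eqref{summaruyarrrr}). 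Then
\[
\frac{1}{|\tilde\phi(\lambda_n)|} \geq \exp\Big(\frac{1}{1-\lambda_n}\int_0^{1-\lambda_n} h(t)\,dt\Big),
\]
so (ii) applied to $\tilde\phi$ delivers (iii).

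For (i) $\Leftrightarrow$ (iv), since $\lambda_n \in (0,1)$ we have $\overline{\lambda_n} = \lambda_n$ and the Taylor expansion $\kappa_{\lambda_n}(z) = \sqrt{1-\lambda_n^2}\sum_{N \geq 0}\lambda_n^N z^N$. The Riesz basis property from Proposition \ref{tyuhghdfsgh} legitimizes reading off Fourier coefficients term by term, giving $\widehat{f}(N) = \sum_n a_n \sqrt{1-\lambda_n^2}\lambda_n^N$. Thus (iv) is precisely the assertion $\widehat{f}(N) = O(e^{-c_f \sqrt N})$, which by Theorem \ref{098ueiorge32} characterizes $f \in \mathscr{R}(H^2)$; combined with Theorem \ref{notusefule8shyshhH}'s identification $\mathscr{R}(\mathcal{K}_B) = \mathcal{K}_B \cap \mathscr{R}(H^2)$ and the hypothesis $f \in \mathcal{K}_B$, this delivers the equivalence. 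The main obstacle, more a matter of careful bookkeeping than conceptual difficulty, is the (ii) $\Leftrightarrow$ (iii) step, in which the two growth-rate theorems are used in opposite directions and the absolute constants arising from the Poisson estimate \eqref{summaruyarrrr} must be absorbed via the scaling freedom of replacing $h$ by $ch$.
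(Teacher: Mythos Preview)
Your proof is correct and follows essentially the same route as the paper: (i) $\Leftrightarrow$ (ii) via Lemma \ref{L:separated-seq-2}, (ii) $\Leftrightarrow$ (iii) via the growth-rate results, and (i) $\Leftrightarrow$ (iv) via Theorem \ref{notusefule8shyshhH} after computing $\widehat f(N)$. You are in fact more careful than the paper in two places: you explicitly reconcile the exponent $1$ in (ii) with the exponent $2$ coming from Lemma \ref{L:separated-seq-2} by passing to outer square roots, and for (iii) $\Rightarrow$ (ii) you correctly invoke Theorem \ref{MTone} (the paper cites only Theorem \ref{growthragfetd6dx6}, which by itself handles only the (ii) $\Rightarrow$ (iii) direction).
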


\begin{proof}
The proof of (i) $\iff$ (ii) follows from Lemma \ref{L:separated-seq-2}. The proof of (ii) $\iff$ (iii) follows from Theorem  \ref{growthragfetd6dx6}.  For the proof that (i) $\iff$ (iv), note that 
$$f^{(N)}(z) = N! \sum_{n = 1}^{\infty} a_n \sqrt{1 - |\lambda_n|^2} \frac{\lambda_{n}^{N}}{(1 - \lambda_n z)^{N + 1}}$$
and thus 
$$\widehat{f}(N) = \frac{f^{(N)}(0)}{N!} = \sum_{n = 1}^{\infty} a_n \sqrt{1 - |\lambda_n|^2} \lambda_{n}^{N}.$$
Now apply Theorem \ref{notusefule8shyshhH}. 
\end{proof}




To obtain a rich class of functions in $\mathscr{R}(\K_B)$, besides the obvious finite linear combinations of $\kappa_{\lambda_n}$, Lemma \ref{L:growth-outer} says that 
$$- (1 - \lambda_n) \log |\phi(\lambda_n)| \to 0, \quad \phi \in H^{\infty} \cap \mathcal{O}.$$
If $c > 0$ and  $\{a_n\}$ satisfies 
$$\sum_{n = 1}^{\infty} |a_n|^2 \exp\Big(\frac{c}{1 - \lambda_n}\Big) < \infty,$$ then  
$$\frac{1}{|\phi(\lambda_n)|} \leq \exp\Big(\frac{c}{1 - \lambda_n}\Big)$$
for sufficiently large enough $n$.
Thus, $f = \sum_{n \geq 1} a_n \kappa_{\lambda_n} \in \mathscr{R}(\K_B)$. In other words, 
$$\bigcup_{c > 0} \Big\{\sum_{n = 1}^{\infty} a_{n} \kappa_{\lambda_n}: \sum_{n = 1}^{\infty} |a_n|^2 \exp\big(\tfrac{c}{1 - \lambda_n}\big) < \infty\Big\} \subset \mathscr{R}_{B}.$$

\section{Correct definition of the common range?}

When $\phi \in H^{\infty} \setminus \{0\}$ it is easy to show that $T_{\overline{\phi}} H^2$ is dense in $H^2$. Thus 
$$\bigcap\big\{ T_{\overline{\phi}}H^2: \phi \in H^{\infty} \setminus \{0\}\big\},$$
 the common range of the non-zero co-analytic Toeplitz operators, is meaningful. It just so happens, through the Douglas factorization theorem mentioned earlier, that $T_{\overline{\phi}} H^2 = T_{\overline{\phi_0}} H^2$, where $\phi_0$ is the outer factor of $\phi$ and thus 
$$\bigcap\big\{ T_{\overline{\phi}}H^2: \phi \in H^{\infty} \setminus \{0\}\big\} = \bigcap\big\{ T_{\overline{\phi}}H^2: \phi \in H^{\infty} \cap \mathcal{O}\big\}.$$
It made sense to us to define the common range of the co-analytic Toeplitz operators in the model space $\K_u$ as 
$$\bigcap\big\{ T_{\overline{\phi}} \K_{u}: \phi \in H^{\infty} \cap \mathcal{O}\big\}.$$
Of course the intersection 
$$\bigcap\big\{T_{\overline{\phi}}\K_u: \phi \in H^{\infty} \setminus \{0\}\big\} = \{0\}$$ since 
$\ker T_{\overline{u}} = \K_u$ so there needs to be some further restriction on the intersection. 
One might wonder if the ``correct'' definition of the common range in the model space should be
$$\bigcap\{ T_{\overline{\phi}} \K_u: \phi \in \mathcal{F}\big\},$$
where $\mathcal{F}$ is the set of $\phi \in H^{\infty}$ such that $T_{\overline{\phi}} \K_u$ is dense in $\K_u$. One could make a case for this definition. However, the resulting common range may not be all that interesting. 

For example, if $B$ is an interpolating Blaschke product  with zeros $\{\lambda_n\}$,  the fact that $\{\kappa_{\lambda_n}\}$ is minimal, in fact uniformly minimal \cite[p.~277]{MR3526203}, shows that $T_{\overline{\phi}} \K_B$ is dense in $\K_B$ if and only if $\phi(\lambda_n) \not = 0$ for all $n$. From \eqref{LLKK8898ui342} we have 
$$T_{\overline{\phi}} \mathcal{K}_{B} = \left\{ \sum_{n = 1}^{\infty} b_n \kappa_{\lambda_n} : \sum_{n = 1}^{\infty} \frac{|b_n|^2}{|\varphi(\lambda_n)|^2} < \infty \right\}.$$
Thus if $\phi \in H^{\infty}$ interpolates the nonzero values of $b_n$ (which can be done via Carleson's theorem), the quantity 
$$ \sum_{n = 1}^{\infty} \frac{|b_n|^2}{|\varphi(\lambda_n)|^2} $$ will be infinite whenever there are an infinite number of nonzero $b_n$. Thus $\mathcal{F}$ will consist of the $\phi \in H^{\infty}$ such that $\phi(\lambda_n) \not = 0$ for all $n$ and 
$$\bigcap\{ T_{\overline{\phi}} \K_B: \phi \in \mathcal{F}\big\}$$ 
will just be the  finite linear combinations of the $\kappa_{\lambda_n}$.

\bibliographystyle{plain}

\bibliography{CRreferences}

\end{document}